\documentclass[12pt, reqno]{amsart}
\usepackage{amsmath, amsthm, amscd, amsfonts, amssymb, graphicx, color}
\usepackage[bookmarksnumbered, colorlinks, plainpages]{hyperref}  
\hypersetup{colorlinks=true,linkcolor=red, anchorcolor=green, citecolor=cyan, urlcolor=red, filecolor=magenta, pdftoolbar=true}

\textheight 22.5truecm \textwidth 14.5truecm
\setlength{\oddsidemargin}{0.35in}\setlength{\evensidemargin}{0.35in}

\setlength{\topmargin}{-.5cm}

\newtheorem{theorem}{Theorem}[section]
\newtheorem{lemma}[theorem]{Lemma}

\newtheorem{corollary}[theorem]{Corollary}
\theoremstyle{definition}

\theoremstyle{remark}
\newtheorem{remark}[theorem]{\bf{Remark}}
\numberwithin{equation}{section}
\begin{document}
\address{$^{[1,3]}$ Department of Mathematics, Jadavpur University, Kolkata 700032, West Bengal, India.}
\email{\url{pintubhunia5206@gmail.com} ; \url{kalloldada@gmail.com}}

\address{$^{[2_a]}$ University of Monastir, Faculty of Economic Sciences and Management of Mahdia, Mahdia, Tunisia}
\address{$^{[2_b]}$ Laboratory Physics-Mathematics and Applications (LR/13/ES-22), Faculty of Sciences of Sfax, University of Sfax, Sfax, Tunisia}
\email{\url{kais.feki@fsegma.u-monastir.tn}\,;\,\url{kais.feki@hotmail.com}}

\subjclass[2010]{47A12, 46C05, 47A05.}
\keywords{Positive operator, $A$-numerical radius, Semi-inner product, sum, product.}

\date{\today}
\author[P. Bhunia, K. Feki and K. Paul] {Pintu Bhunia$^{1}$, Kais Feki$^{2_{a,b}}$ and Kallol Paul$^{3}$ }
\title[Numerical radius inequalities]{\Small{Numerical radius inequalities for products and sums of semi-Hilbertian space operators}}

\maketitle

\begin{abstract}
New inequalities for the $A$-numerical radius of the products and sums of operators acting on a semi-Hilbert space, i.e. a space generated
by a positive semidefinite operator $A$, are established. In particular, it is proved for operators $T$ and $S,$ having $A$-adjoint, that
$$
\omega_A(TS) \leq \frac{1}{2}\omega_A(ST)+\frac{1}{4}\Big(\|T\|_A\|S\|_A+\|TS\|_A\Big),
$$
 where $\omega_A(T)$ and $\|T\|_A$ denote the $A$-numerical radius and the $A$-operator seminorm of an operator $T$.
\end{abstract}

\section{Introduction and Preliminaries}\label{s1}
\noindent Let $\mathcal{B}(\mathcal{H})$ stand for the $C^{\ast}$-algebra of all bounded linear operators on a complex Hilbert space $\mathcal{H}$ with inner product $\langle\cdot,\cdot\rangle$ and the corresponding norm $\|\cdot\|$. For $T\in\mathcal{B}(\mathcal{H})$, we denote by $\mathcal{R}(T)$, $\mathcal{N}(T)$ and $T^*$ the range, the kernel and the adjoint of $T$, respectively. For a given linear subspace $\mathcal{M}$ of $\mathcal{H}$, its closure in the norm topology of $\mathcal{H}$ will be denoted by $\overline{\mathcal{M}}$. Further, let $P_{\mathcal{S}}$ stand for the orthogonal projection onto a closed subspace $\mathcal{S}$ of $\mathcal{H}$. An operator $T\in\mathcal{B}(\mathcal{H})$ is called positive if $\langle Tx, x\rangle\geq0$ for all $x\in{\mathcal H }$, and we then write $T\geq 0$. Furthermore, if $T\geq 0$, then the square root of $T$ is denoted by
$T^{1/2}$. For $T\in\mathcal{B}(\mathcal{H})$,  the absolute value of $T$, denoted by $|T|$,  is defined as $|T|=(T^*T)^{1/2}$. Throughout the article, $A$ denotes a non-zero positive operator on $\mathcal{H}$. The positive operator $A$ induces the following semi-inner product
$$\langle\cdot,\cdot\rangle_{A}:\mathcal{H}\times \mathcal{H}\longrightarrow\mathbb{C},\;(x,y)\longmapsto \langle x, y\rangle_{A}:=\langle Ax, y\rangle=\langle A^{1/2}x, A^{1/2}y\rangle.$$
The seminorm induced by ${\langle \cdot, \cdot\rangle}_A$ is given by ${\|x\|}_A=\|A^{1/2}x\|$ for all $x\in\mathcal{H}$. It is easy to check that ${\|\cdot\|}_A$ is a norm if and only if $A$ is injective and that the seminormed space $(\mathcal{H}, {\|\cdot\|}_A)$ is complete if and only if $\overline{\mathcal{R}(A)}=\mathcal{R}(A)$. It is well-known that the semi-inner product ${\langle \cdot, \cdot\rangle}_{A}$ induces an inner product on the quotient space $\mathcal{H}/\mathcal{N}(A)$ which is not complete unless $\mathcal{R}(A)$ is closed. However, a canonical construction due to de Branges and Rovnyak \cite{branrov} (see also \cite{fekilaa}) shows that the completion of $\mathcal{H}/\mathcal{N}(A)$ is isometrically isomorphic to the Hilbert space $\mathcal{R}(A^{1/2})$ with the inner product
\begin{align*}\label{I.1}
\langle A^{1/2}x,A^{1/2}y\rangle_{\mathbf{R}(A^{1/2})}:=\langle P_{\overline{\mathcal{R}(A)}}x, P_{\overline{\mathcal{R}(A)}}y\rangle,\quad\forall\, x,y \in \mathcal{H}.
\end{align*}
For the sequel, the Hilbert space $\big(\mathcal{R}(A^{1/2}), \langle\cdot,\cdot\rangle_{\mathbf{R}(A^{1/2})}\big)$ will be simply denoted by $\mathbf{R}(A^{1/2})$. For an account of results related to $\mathbf{R}(A^{1/2})$, we refer the readers to \cite{acg3}.

Let $T \in \mathcal{B}(\mathcal{H})$. We recall that an operator $S\in\mathcal{B}(\mathcal{H})$ is called an $A$-adjoint of $T$ if $\langle Tx, y\rangle_A=\langle x, Sy\rangle_A$ for all $x,y\in \mathcal{H}$. One can observe that the existence of an $A$-adjoint of $T$ is equivalent to the existence of a solution in $\mathcal{B}(\mathcal{H})$ of the equation $AX = T^*A$.
Clearly, the existence of an $A$-adjoint operator is not guaranteed. If the set of all operators admitting $A$-adjoints is denoted by $\mathcal{B}_{A}(\mathcal{H})$, then by Douglas theorem \cite{doug} we have,
$$\mathcal{B}_{A}(\mathcal{H})=\left\{T\in \mathcal{B}(\mathcal{H})\,;\;\mathcal{R}(T^{*}A)\subseteq \mathcal{R}(A)\right\}.$$
If $T\in \mathcal{B}_A(\mathcal{H})$, the reduced solution of the equation
$AX=T^*A$ is a distinguished $A$-adjoint operator of $T$, which is denoted by $T^{\sharp_A}$ and satisfies $\mathcal{R}(T^{\sharp_A}) \subseteq \overline{\mathcal{R}(A)}$.  Note that $T^{\sharp_A}=A^\dag T^*A$, where $A^\dag$ is the Moore-Penrose inverse of $A$ (see \cite{acg2}). If $T \in \mathcal{B}_A({\mathcal{H}})$, then $T^{\sharp_A} \in \mathcal{B}_A({\mathcal{H}})$, $(T^{\sharp_A})^{\sharp_A}=P_{\overline{\mathcal{R}(A)}}TP_{\overline{\mathcal{R}(A)}}$ and $((T^{\sharp_A})^{\sharp_A})^{\sharp_A}=T$. Moreover, if $S\in \mathcal{B}_A(\mathcal{H})$ then $TS \in\mathcal{B}_A({\mathcal{H}})$ and $(TS)^{\sharp_A}=S^{\sharp_A}T^{\sharp_A}.$
For more results concerning $T^{\sharp_A}$, we invite the readers to see \cite{acg1,acg2}.
An operator $T$ is called $A$-bounded if there exists $\lambda>0$ such that $ \|Tx\|_{A} \leq \lambda \|x\|_{A},$ for every $x\in \mathcal{H}.$  In virtue of Douglas theorem, one can see that the set  of all operators admitting $A^{1/2}$-adjoints, denoted by $\mathcal{B}_{A^{1/2}}(\mathcal{H})$, is same as the collection of all $A$-bounded operators, i.e.,
$$\mathcal{B}_{A^{1/2}}(\mathcal{H})=\left\{T \in \mathcal{B}(\mathcal{H})\,;\;\exists \,\lambda > 0\,;\;\|Tx\|_{A} \leq \lambda \|x\|_{A},\;\forall\,x\in \mathcal{H}\right\}.$$
It is well-known that $\mathcal{B}_{A}(\mathcal{H})$ and $\mathcal{B}_{A^{1/2}}(\mathcal{H})$ are two subalgebras of $\mathcal{B}(\mathcal{H})$ which are, in general, neither closed nor dense in $\mathcal{B}(\mathcal{H})$. Further, we have $\mathcal{B}_{A}(\mathcal{H})\subseteq \mathcal{B}_{A^{1/2}}(\mathcal{H})$ (see \cite{acg1,feki01}). If $T\in\mathcal{B}_{A^{1/2}}(\mathcal{H})$, then the seminorm of $T$ induces by $\langle\cdot,\cdot\rangle_{A}$ is given by
\begin{equation}\label{semii}
\|T\|_A:=\sup_{\substack{x\in \overline{\mathcal{R}(A)},\\ x\not=0}}\frac{\|Tx\|_A}{\|x\|_A}=\sup\big\{{\|Tx\|}_A\,; \,\,x\in \mathcal{H},\, {\|x\|}_A =1\big\}<\infty.
\end{equation}
It was shown in \cite[Proposition 2.3]{acg2} that, for every $T \in \mathcal{B}_A({\mathcal{H}})$, we have
\begin{equation}\label{diez}
\|T^{\sharp_A}T\|_A = \| TT^{\sharp_A}\|_A=\|T\|_A^2 =\|T^{\sharp_A}\|_A^2.
\end{equation}
Furthermore, Saddi \cite{saddi} introduced   the concept of the $A$-numerical radius of an operator $T\in\mathcal{B}(\mathcal{H})$ as follows
\begin{align*}
\omega_A(T)
&:= \sup \left\{|\langle Tx, x\rangle_A|\,;\;x\in\mathcal{H},\|x\|_A = 1\right\}.
\end{align*}
We mention here that it may happen that ${\|T\|}_A$ and $\omega_A(T)$ are equal to $+ \infty$ for some $T\in\mathcal{B}(\mathcal{H})\setminus \mathcal{B}_{A^{1/2}}(\mathcal{H})$ (see \cite{feki01}). However, it was shown in \cite{bakfeki01} that ${\|\cdot\|}_A$ and $\omega_A(\cdot)$ are equivalent seminorms on $\mathcal{B}_{A^{1/2}}(\mathcal{H})$. More precisely, for every $T\in \mathcal{B}_{A^{1/2}}(\mathcal{H})$, the following inequalities hold
\begin{equation}\label{refine1}
\tfrac{1}{2} \|T\|_A\leq\omega_A(T) \leq \|T\|_A.
\end{equation}
Let $T\in\mathcal{B}_{A^{1/2}}(\mathcal{H})$. Then $\|T\|_A=0$ if and only if $AT=0$. Furthermore, $\|Tx\|_A\leq \|T\|_A\|x\|_A,$ for every $x\in \mathcal{H}.$ 
 This implies that for all $T,S\in \mathcal{B}_{A^{1/2}}(\mathcal{H})$, $\|TS\|_A\leq \|T\|_A\|S\|_A.$ An operator $T\in\mathcal{B}(\mathcal{H})$ is called $A$-selfadjoint if $AT$ is selfadjoint and it is called $A$-positive if $AT$ is a positive operator. An operator $T\in \mathcal{B}_A(\mathcal{H})$ is said to be $A$-normal if $T^{\sharp_A}T=TT^{\sharp_A}$ (see \cite{acg3}). For the sequel, if $A=I$ then $\|T\|$, $r(T)$ and $\omega(T)$ denote respectively the classical operator norm, the spectral radius and the numerical radius of an operator $T$.

For any operator $T\in {\mathcal B}_A({\mathcal H})$, we write $\Re_A(T):=\frac{T+T^{\sharp_A}}{2}$. Following \cite[Theorem 2.5]{zamani1}, we have that if $T\in\mathcal{B}_{A}(\mathcal{H})$ then
\begin{align}\label{zm}
\omega_A(T) = \displaystyle{\sup_{\theta \in \mathbb{R}}}{\left\|\Re_A(e^{i\theta}T)\right\|}_A.
\end{align}
The $A$-spectral radius of an operator $T\in \mathcal{B}_{A^{1/2}}(\mathcal{H})$ was defined by the second author in \cite{feki01} as
\begin{equation}\label{newrad}
r_A(T):=\displaystyle\inf_{n\geq 1}\|T^n\|_A^{\frac{1}{n}}=\displaystyle\lim_{n\to\infty}\|T^n\|_A^{\frac{1}{n}}.
\end{equation}
The second equality in \eqref{newrad} is also proved in \cite[Theorem 1]{feki01}. In addition, it was shown in \cite{feki01} that $r_A(\cdot)$ satisfies the commutativity property, i.e.,
\begin{equation}\label{commut}
r_A(TS)=r_A(ST), \quad\forall\;T,S\in \mathcal{B}_{A^{1/2}}(\mathcal{H}).
\end{equation}
Also, the following relation between the $A$-spectral radius and the $A$-numerical radius of $A$-bounded operators is also proved in \cite{feki01}.
\begin{equation}\label{rawa}
r_A(T)\leq \omega_A(T), \quad\forall\;T\in \mathcal{B}_{A^{1/2}}(\mathcal{H}).
\end{equation}

Recently, many mathematicians have obtained  different $A$-numerical radius inequalities of semi-Hilbertian space operators, the interested readers are invited to see \cite{bfeki, BPN, RajAOT, feki03, fekip1,rout1,rout2} and references therein. Here, we obtain several new inequalities for the $A$-numerical radius of the products and the sums of semi-Hilbertian space operators. The bounds obtained here improve on the existing bounds.

\section{\textbf{On inequalities for product of operators}}\label{s2}

We begin this section with the following known lemma which can be found in \cite{feki01}.

\begin{lemma} \label{lem03}
Let $T\in\mathcal{B}(\mathcal{H})$ be an $A$-selfadjoint operator. Then,
\begin{equation*}
\|T\|_{A}=\omega_A(T)=r_A(T).
\end{equation*}
\end{lemma}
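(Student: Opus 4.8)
The plan is to reduce the whole statement to a single identity, $\|T^2\|_A=\|T\|_A^2$, and then obtain all three equalities by a squeezing argument. First I would record the consequences of $A$-selfadjointness. Since $AT$ is selfadjoint we have $AT=(AT)^*=T^*A$, so $X=T$ already solves $AX=T^*A$; hence $T\in\mathcal{B}_A(\mathcal{H})\subseteq\mathcal{B}_{A^{1/2}}(\mathcal{H})$, which guarantees $\|T\|_A<\infty$ and makes both \eqref{refine1} and \eqref{rawa} applicable to $T$. Moreover, for all $x,y\in\mathcal{H}$,
\[
\langle Tx,y\rangle_A=\langle ATx,y\rangle=\langle x,ATy\rangle=\langle x,Ty\rangle_A,
\]
so $T$ is symmetric for the semi-inner product. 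Iterating $T^*A=AT$ gives $T^{*n}A=AT^n$ for every $n\ge1$, so each power $T^n$ is again $A$-selfadjoint; this is what will let me double repeatedly.

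Next I would establish the key identity. For any $x\in\mathcal{H}$, using the symmetry above and then the Cauchy–Schwarz inequality for the positive semidefinite form $\langle\cdot,\cdot\rangle_A$,
\[
\|Tx\|_A^2=\langle Tx,Tx\rangle_A=\langle x,T^2x\rangle_A\le\|x\|_A\,\|T^2x\|_A\le\|T^2\|_A\,\|x\|_A^2.
\]
Taking the supremum over $\|x\|_A=1$ yields $\|T\|_A^2\le\|T^2\|_A$, while the reverse inequality is the submultiplicativity $\|T^2\|_A\le\|T\|_A^2$ recorded in Section~\ref{s1}. Hence $\|T^2\|_A=\|T\|_A^2$, the semi-Hilbertian analogue of the $C^*$-identity.

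To finish, I would iterate. Since each $T^{2^{k-1}}$ is $A$-selfadjoint, applying the identity to it gives $\|T^{2^k}\|_A=\|T^{2^{k-1}}\|_A^2$, and induction yields $\|T^{2^k}\|_A=\|T\|_A^{2^k}$ for all $k\ge0$. Feeding the subsequence $n=2^k$ into the limit in \eqref{newrad} gives
\[
r_A(T)=\lim_{k\to\infty}\|T^{2^k}\|_A^{1/2^k}=\|T\|_A.
\]
Combining this with \eqref{rawa} and \eqref{refine1} produces the squeeze
\[
\|T\|_A=r_A(T)\le\omega_A(T)\le\|T\|_A,
\]
which forces $\|T\|_A=\omega_A(T)=r_A(T)$, as claimed.

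The only real content is the identity $\|T^2\|_A=\|T\|_A^2$; the point to watch is that, because $\|\cdot\|_A$ is merely a seminorm and $T^{\sharp_A}$ need not coincide with $T$, I must argue through the semi-inner product itself (Cauchy–Schwarz together with the symmetry $\langle Tx,y\rangle_A=\langle x,Ty\rangle_A$) rather than treating $T$ as a genuine selfadjoint operator, and I must verify that powers remain $A$-selfadjoint so the doubling can be repeated. Once these two points are settled, the three equalities drop out formally from inequalities already available in the paper.
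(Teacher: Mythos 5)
Your proof is correct, and it takes a genuinely different route from the paper's. The paper does not actually prove Lemma \ref{lem03}: it quotes it as known from \cite{feki01}, where the argument goes through the canonical representation on the Hilbert space $\mathbf{R}(A^{1/2})$ --- an $A$-selfadjoint $T$ induces a selfadjoint operator $\widetilde{T}\in\mathcal{B}(\mathbf{R}(A^{1/2}))$ via $Z_AT=\widetilde{T}Z_A$, the classical equalities $\|\widetilde{T}\|=\omega(\widetilde{T})=r(\widetilde{T})$ for selfadjoint Hilbert space operators apply, and everything transfers back through $\|T\|_A=\|\widetilde{T}\|_{\mathcal{B}(\mathbf{R}(A^{1/2}))}$, $\omega_A(T)=\omega(\widetilde{T})$ and $r_A(T)=r(\widetilde{T})$; this is the same dictionary the present paper deploys later in Theorems \ref{theo1}--\ref{theo3}. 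You instead work entirely inside $(\mathcal{H},\langle\cdot,\cdot\rangle_A)$: the $C^*$-type identity $\|T^2\|_A=\|T\|_A^2$ obtained from the $A$-symmetry $\langle Tx,y\rangle_A=\langle x,Ty\rangle_A$ plus Cauchy--Schwarz for the semi-inner product, the observation that powers of an $A$-selfadjoint operator remain $A$-selfadjoint (from $AT^n=(T^n)^*A$ by induction), iteration along $n=2^k$ in the Gelfand-type formula \eqref{newrad} to get $r_A(T)=\|T\|_A$, and the squeeze via \eqref{rawa} and \eqref{refine1}. The two preliminary checks you flag are indeed the delicate points, and you handle both correctly: $T\in\mathcal{B}_A(\mathcal{H})$ because $X=T$ solves $AX=T^*A$, so the seminorm calculus of Section \ref{s1} (submultiplicativity, $\|Tx\|_A\leq\|T\|_A\|x\|_A$, and the bounds \eqref{refine1}, \eqref{rawa}) is legitimately available. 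The transfer proof is shorter once the $\mathbf{R}(A^{1/2})$ machinery is in place and yields every selfadjoint-operator fact at one stroke; your proof buys independence from that machinery, using only inequalities already recorded in Section \ref{s1} of this paper.
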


Our first result reads as:

\begin{theorem}\label{theo2prod}
Let $T, S\in\mathcal{B}_{A}(\mathcal{H})$. Then,
\begin{align*}
\omega_A(TS) \leq \|T\|_A\,\omega_A(S)+\frac{1}{2}\omega_A\left( TS \pm ST^{\sharp_A}\right).
\end{align*}
\end{theorem}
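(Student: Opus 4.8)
The plan is to reduce everything to the angular-seminorm formula \eqref{zm}, namely $\omega_A(TS)=\sup_{\theta\in\mathbb{R}}\|\Re_A(e^{i\theta}TS)\|_A$, and to estimate $\|\Re_A(e^{i\theta}TS)\|_A$ for each fixed $\theta$ by splitting $2\Re_A(e^{i\theta}TS)=e^{i\theta}TS+e^{-i\theta}S^{\sharp_A}T^{\sharp_A}$ in two different ways. Using only $(TS)^{\sharp_A}=S^{\sharp_A}T^{\sharp_A}$, I would record the two identities
\[
2\Re_A(e^{i\theta}TS)=2\Re_A(e^{i\theta}S)\,T^{\sharp_A}+e^{i\theta}(TS-ST^{\sharp_A})
\]
and
\[
2\Re_A(e^{i\theta}TS)=2T\,\Re_A(e^{i\theta}S)+e^{-i\theta}\big(S^{\sharp_A}T^{\sharp_A}-TS^{\sharp_A}\big),
\]
obtained by factoring $T^{\sharp_A}$ on the right, respectively $T$ on the left.

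Next I would average these two identities: the two principal terms produce $\Re_A(e^{i\theta}S)T^{\sharp_A}+T\,\Re_A(e^{i\theta}S)$, while the two defect terms combine into $\tfrac12\big[e^{i\theta}(TS-ST^{\sharp_A})+e^{-i\theta}(S^{\sharp_A}T^{\sharp_A}-TS^{\sharp_A})\big]$, which I claim equals $\Re_A\!\big(e^{i\theta}(TS-ST^{\sharp_A})\big)$. Granting this, taking the $A$-seminorm and using submultiplicativity $\|XY\|_A\le\|X\|_A\|Y\|_A$, the identity $\|T^{\sharp_A}\|_A=\|T\|_A$ from \eqref{diez}, and $\|\Re_A(e^{i\theta}S)\|_A\le\omega_A(S)$ (again by \eqref{zm}), I would get
\[
2\|\Re_A(e^{i\theta}TS)\|_A\le 2\|T\|_A\,\omega_A(S)+\big\|\Re_A(e^{i\theta}(TS-ST^{\sharp_A}))\big\|_A.
\]
A supremum over $\theta$ and a further application of \eqref{zm} to $TS-ST^{\sharp_A}$ then give the stated inequality with the minus sign. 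The plus sign is handled identically after replacing the factor $2\Re_A(e^{i\theta}S)$ by the ``$A$-imaginary part'' $e^{-i\theta}S^{\sharp_A}-e^{i\theta}S$, whose $A$-seminorm is again at most $2\omega_A(S)$, since it equals $2\big\|\Re_A(e^{i(\theta-\pi/2)}S)\big\|_A$.

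The step I expect to be delicate is the identification of the combined defect with $\Re_A(e^{i\theta}(TS-ST^{\sharp_A}))$, since this is where the semi-Hilbertian framework departs from the classical one. Indeed $\big(TS-ST^{\sharp_A}\big)^{\sharp_A}=S^{\sharp_A}T^{\sharp_A}-(T^{\sharp_A})^{\sharp_A}S^{\sharp_A}$, and the involution fails: $(T^{\sharp_A})^{\sharp_A}=P_{\overline{\mathcal{R}(A)}}TP_{\overline{\mathcal{R}(A)}}\neq T$ in general, so the two expressions do not match on the nose. The remedy is to observe that, because $\mathcal{R}(S^{\sharp_A})\subseteq\overline{\mathcal{R}(A)}$, the discrepancy reduces to $(I-P_{\overline{\mathcal{R}(A)}})TS^{\sharp_A}$ up to a unimodular factor, and this operator is annihilated by $A$ and hence has zero $A$-seminorm. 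Since $\|\cdot\|_A$ is only a seminorm, such an $A$-null summand is invisible to all the estimates above, so every seminorm and numerical-radius computation remains valid and the argument closes.
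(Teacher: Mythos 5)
Your proof is correct, and while it runs on the same engine as the paper's --- the representation \eqref{zm} plus the add-and-subtract of $e^{-i\theta}TS^{\sharp_A}$ (your second identity is literally the paper's decomposition) --- it closes the argument by a genuinely different mechanism, so a comparison is worthwhile. The paper sticks with the single one-sided factorization, obtains $\omega_A(TS)\leq\|T\|_A\omega_A(S)+\tfrac12\,\omega_A(S^{\sharp_A}T^{\sharp_A}-TS^{\sharp_A})$ as in \eqref{prod}, and converts the last term via the pointwise identity $|\langle(S^{\sharp_A}T^{\sharp_A}-TS^{\sharp_A})x,x\rangle_A|=|\langle(TS-ST^{\sharp_A})x,x\rangle_A|$, which follows from $\langle(TS)^{\sharp_A}x,x\rangle_A=\overline{\langle TSx,x\rangle_A}$ and $\langle TS^{\sharp_A}x,x\rangle_A=\overline{\langle ST^{\sharp_A}x,x\rangle_A}$; since no $\sharp_A$-adjoint of $TS-ST^{\sharp_A}$ is ever computed, the failure of the involution never enters. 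You instead symmetrize the left and right factorizations so that the defect becomes, at the operator level, $\Re_A\big(e^{i\theta}(TS-ST^{\sharp_A})\big)$ up to the summand $\tfrac12 e^{-i\theta}(I-P_{\overline{\mathcal{R}(A)}})TS^{\sharp_A}$, and you are right that this summand is harmless: $\mathcal{R}(S^{\sharp_A})\subseteq\overline{\mathcal{R}(A)}$ gives $(T^{\sharp_A})^{\sharp_A}S^{\sharp_A}=P_{\overline{\mathcal{R}(A)}}TS^{\sharp_A}$, and $A(I-P_{\overline{\mathcal{R}(A)}})=0$ makes the discrepancy invisible to both $\|\cdot\|_A$ and $\omega_A(\cdot)$, so a second application of \eqref{zm} to $TS-ST^{\sharp_A}\in\mathcal{B}_A(\mathcal{H})$ finishes. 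The trade-off: the paper's route is shorter and projection-free, and it gets the $+$ sign for free by replacing $T$ with $iT$ in \eqref{kjjj} (note $(iT)^{\sharp_A}=-iT^{\sharp_A}$), whereas you re-run the whole argument with the $A$-imaginary part $e^{-i\theta}S^{\sharp_A}-e^{i\theta}S$; your route, on the other hand, exhibits the bound as a clean operator identity (two main terms plus an $A$-real commutator-type defect) and makes explicit exactly where, and why, the semi-Hilbertian pathology $(T^{\sharp_A})^{\sharp_A}\neq T$ is absorbed --- precisely the delicate point you flagged and resolved.
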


\begin{proof}
Let $\theta \in \mathbb{R}$. Clearly, $\Re_A(e^{i \theta}TS)$ is an $A$-selfadjoint operator. Therefore, from Lemma \ref{lem03} we get,
\begin{eqnarray*}
\left \|\Re_A(e^{i \theta}TS)\right\|_A &=& \omega_A\left(\Re_A(e^{i \theta}TS)\right)\\
&=& \omega_A\left (\frac{1}{2}(e^{i \theta}TS+e^{-i \theta}S^{\sharp_A}T^{\sharp_A} ) \right )\\
&=& \omega_A\left (\frac{1}{2}(e^{i \theta}TS+ e^{-i \theta}TS^{\sharp_A}+e^{-i \theta}S^{\sharp_A}T^{\sharp_A}-e^{-i \theta}TS^{\sharp_A} ) \right )\\
&=& \omega_A\left ( T\Re_A (e^{i \theta}S) +  \frac{1}{2}e^{-i \theta}( S^{\sharp_A}T^{\sharp_A}- TS^{\sharp_A} ) \right )\\
&\leq & \omega_A \left ( T\Re_A (e^{i \theta}S) \right )+ \omega_A\left ( \frac{1}{2}e^{-i \theta}( S^{\sharp_A}T^{\sharp_A}- TS^{\sharp_A} ) \right )\\
&\leq& \left \|T\Re_A (e^{i \theta}S) \right \|_A+ \frac{1}{2}\omega_A\left ( S^{\sharp_A}T^{\sharp_A}- TS^{\sharp_A}  \right )\\
&\leq&  \|T\|_A \left \|\Re_A (e^{i \theta}S) \right \|_A+ \frac{1}{2}\omega_A\left ( S^{\sharp_A}T^{\sharp_A}- TS^{\sharp_A}  \right )\\
&\leq& \|T\|_A\omega_A(S)+\frac{1}{2}\omega_A\left( S^{\sharp_A}T^{\sharp_A}- TS^{\sharp_A}\right).
\end{eqnarray*}
Taking supremum over all $\theta \in \mathbb{R}$ we get,
\begin{align}\label{prod}
\omega_A(TS) \leq \|T\|_A\omega_A(S)+\frac{1}{2}\omega_A\left( S^{\sharp_A}T^{\sharp_A}- TS^{\sharp_A}\right).
\end{align}
Next, for   $x\in \mathcal{H}$ we have,
\[ |\langle(S^{\sharp_A}T^{\sharp_A}- TS^{\sharp_A})x,x \rangle_A |=|\langle (TS-ST^{\sharp_A})x,x \rangle_A|.\]
This implies that $\omega_A\left ( S^{\sharp_A}T^{\sharp_A}- TS^{\sharp_A}  \right )=\omega_A\left( TS-ST^{\sharp_A}\right).$ Thus, it follows from (\ref{prod}) that
\begin{align}\label{kjjj}
\omega_A(TS) \leq \|T\|_A\omega_A(S)+\frac{1}{2}\omega_A\left( TS-ST^{\sharp_A}\right).
\end{align}
Also, replacing $T$ by $iT$ in \eqref{kjjj} we get,
\begin{align}\label{kjjjj}
\omega_A(TS) \leq \|T\|_A\omega_A(S)+\frac{1}{2}\omega_A\left( TS+ST^{\sharp_A}\right).
\end{align}
The proof now follows from \eqref{kjjj} and  \eqref{kjjjj}.
\end{proof}

\begin{remark}
It is easy to verify that (also see in \cite[Cor. 3.3]{BPN}) $$\omega_A\left( TS \pm ST^{\sharp_A}\right)\leq 2\|T\|_A\omega_A(S).$$ Therefore,
$\|T\|_A\omega_A(S)+\frac{1}{2}\omega_A\left( TS \pm ST^{\sharp_A}\right)\leq 2\|T\|_A\omega_A(S).$
Thus, the inequality obtained in Theorem \ref{theo2prod} is stronger than the well-know inequality $$\omega_A(TS) \leq 2\|T\|_A\omega_A(S).$$

\end{remark}

In order to obtain our next inequality that gives an upper bound for the $A$-numerical radius of product of two operators, we need the following lemmas. First we consider the ${2\times 2}$ operator diagonal matrix $\mathbb{A}=\begin{pmatrix}
A &0\\
0 &A
\end{pmatrix}$. Clearly, $\mathbb{A}$ is a positive operator on $\mathcal{H}\oplus \mathcal{H}$. So, $\mathbb{A}$ induces the following semi-inner product on  $ \mathcal{H}\oplus \mathcal{H} $  defined as
$$\langle x, y\rangle_{\mathbb{A}}= \langle \mathbb{A}x, y\rangle=\langle x_1, y_1\rangle_A+\langle x_2, y_2\rangle_A,$$
 for all $x=(x_1,x_2), y=(y_1,y_2)\in \mathcal{H}\oplus \mathcal{H}$. Note that if $T,S,X,Y \in \mathcal{B}_{A}(\mathcal{H})$ then, it was shown in \cite[Lemma 3.1]{bfeki} that $\begin{pmatrix}
T & S \\
X&Y
\end{pmatrix}\in \mathcal{B}_{\mathbb{A}}(\mathcal{H}\oplus \mathcal{H})$ and
\begin{equation}\label{diez2}
\begin{pmatrix}
T&S \\
X&Y
\end{pmatrix}^{\sharp_\mathbb{A}}=\begin{pmatrix}
T^{\sharp_A} & X^{\sharp_A}\\
S^{\sharp_A} & Y^{\sharp_A}
\end{pmatrix}.
\end{equation}

\begin{lemma}$($\cite{fekip1}$)$\label{lem01}
Let $T,S\in \mathcal{B}(\mathcal{H})$ be $A$-positive operators. 
 Then,
\begin{equation}
\omega_{\mathbb{A}}\left[\begin{pmatrix}0&T\\S&0\end{pmatrix}\right]= \frac{1}{2}\left\|T+S\right\|_A.
\end{equation}
\end{lemma}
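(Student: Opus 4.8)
The plan is to prove the identity
$$
\omega_{\mathbb{A}}\left[\begin{pmatrix}0&T\\S&0\end{pmatrix}\right]= \frac{1}{2}\left\|T+S\right\|_A
$$
for $A$-positive operators $T,S$ by first computing the $\sharp_{\mathbb{A}}$-adjoint of the off-diagonal block matrix via \eqref{diez2}, then applying the formula \eqref{zm} that expresses the $A$-numerical radius as a supremum over $\theta$ of the seminorm of the $A$-real part. Writing $M=\begin{pmatrix}0&T\\S&0\end{pmatrix}$, formula \eqref{diez2} gives $M^{\sharp_{\mathbb{A}}}=\begin{pmatrix}0&S^{\sharp_A}\\T^{\sharp_A}&0\end{pmatrix}$. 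Since $T,S$ are $A$-positive, the operators $AT$ and $AS$ are selfadjoint, which forces $T^{\sharp_A}=A^\dagger T^*A$ to equal $T$ on $\overline{\mathcal{R}(A)}$ (and similarly for $S$); I would record this so that the relevant quadratic forms collapse cleanly.

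First I would use \eqref{zm} to write
$$
\omega_{\mathbb{A}}(M)=\sup_{\theta\in\mathbb{R}}\left\|\Re_{\mathbb{A}}(e^{i\theta}M)\right\|_{\mathbb{A}},
$$
and compute $\Re_{\mathbb{A}}(e^{i\theta}M)=\tfrac12\left(e^{i\theta}M+e^{-i\theta}M^{\sharp_{\mathbb{A}}}\right)$, which by the adjoint computation above is the block matrix $\tfrac12\begin{pmatrix}0 & e^{i\theta}T+e^{-i\theta}S^{\sharp_A}\\ e^{i\theta}S+e^{-i\theta}T^{\sharp_A} & 0\end{pmatrix}$. The next step is to evaluate the $\mathbb{A}$-seminorm of such an off-diagonal block operator. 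For a matrix $\begin{pmatrix}0&B\\C&0\end{pmatrix}$ acting on $\mathcal H\oplus\mathcal H$, a direct computation of $\|(Bx_2,Cx_1)\|_{\mathbb{A}}^2=\|Bx_2\|_A^2+\|Cx_1\|_A^2$ over unit vectors in the $\mathbb A$-seminorm shows the seminorm decouples into $\max\{\|B\|_A,\|C\|_A\}$. Applying this with $B=\tfrac12(e^{i\theta}T+e^{-i\theta}S^{\sharp_A})$ and $C=\tfrac12(e^{i\theta}S+e^{-i\theta}T^{\sharp_A})$ reduces the problem to controlling these two scalar-seminorm quantities as $\theta$ varies.

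I would then invoke $A$-positivity: because $e^{i\theta}T+e^{-i\theta}S^{\sharp_A}$ and $e^{i\theta}S+e^{-i\theta}T^{\sharp_A}$ are related by $\sharp_A$-adjoints up to the phase (indeed $(e^{i\theta}S+e^{-i\theta}T^{\sharp_A})^{\sharp_A}=e^{-i\theta}S^{\sharp_A}+e^{i\theta}(T^{\sharp_A})^{\sharp_A}$, which matches $B$ on $\overline{\mathcal R(A)}$ by the positivity-induced identity $(T^{\sharp_A})^{\sharp_A}=T$ there), the two blocks have equal $A$-seminorm by \eqref{diez}. Hence the supremum over $\theta$ becomes $\sup_\theta \tfrac12\|e^{i\theta}T+e^{-i\theta}S^{\sharp_A}\|_A$, and using $S^{\sharp_A}=S$ on $\overline{\mathcal R(A)}$ this is $\sup_\theta\tfrac12\|e^{i\theta}T+e^{-i\theta}S\|_A$. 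The optimal phase is $\theta=0$ since $T,S$ are $A$-positive and their quadratic forms add constructively, yielding $\tfrac12\|T+S\|_A$.

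The main obstacle I anticipate is the careful bookkeeping around $\overline{\mathcal R(A)}$: the identities $(T^{\sharp_A})^{\sharp_A}=P_{\overline{\mathcal R(A)}}TP_{\overline{\mathcal R(A)}}$ and $T^{\sharp_A}=T$ only hold modulo the seminorm kernel $\mathcal N(A)$, so every seminorm computation must be restricted to $\overline{\mathcal R(A)}$ as in the definition \eqref{semii}, and I must verify that the phase optimization genuinely attains its maximum at $\theta=0$ rather than merely being bounded by $\tfrac12\|T+S\|_A$. Establishing the matching lower bound — by exhibiting a near-optimal unit vector for $\|T+S\|_A$ and lifting it to $\mathcal H\oplus\mathcal H$ to force equality — is the delicate direction; the upper bound follows more routinely from the triangle inequality $\|e^{i\theta}T+e^{-i\theta}S\|_A\le\|T\|_A+\|S\|_A$ combined with the $A$-positivity that makes $\|T+S\|_A=\|T\|_A+\|S\|_A$.
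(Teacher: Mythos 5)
Your reduction is sound up to its last step. Writing $M=\begin{pmatrix}0&T\\S&0\end{pmatrix}$ as you do: $A$-positivity gives $T^*A=AT$, so $T,S\in\mathcal{B}_A(\mathcal{H})$ and $T^{\sharp_A}=A^\dagger T^*A=P_{\overline{\mathcal{R}(A)}}T$, whence $\|T^{\sharp_A}-T\|_A=0$ (and similarly for $S$); combining \eqref{diez2}, \eqref{zm} and the off-diagonal norm identity $\left\|\begin{pmatrix}0&B\\C&0\end{pmatrix}\right\|_{\mathbb{A}}=\max\{\|B\|_A,\|C\|_A\}$ (which is Lemma \ref{lem02}(b)) indeed yields
$$\omega_{\mathbb{A}}(M)=\sup_{\theta\in\mathbb{R}}\tfrac12\left\|e^{i\theta}T+e^{-i\theta}S\right\|_A.$$
But then the proof breaks, in two ways. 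First, you have the two directions reversed: the lower bound $\omega_{\mathbb{A}}(M)\ge\tfrac12\|T+S\|_A$ is the trivial one (evaluate the supremum at $\theta=0$ and use $\|S^{\sharp_A}-S\|_A=0$); the entire content of the lemma is the upper bound $\|e^{i\theta}T+e^{-i\theta}S\|_A\le\|T+S\|_A$ for every $\theta$. Second, and fatally, your justification of that upper bound rests on the assertion that $A$-positivity forces $\|T+S\|_A=\|T\|_A+\|S\|_A$. This is false: take $A=I$, $T=\mathrm{diag}(1,0)$, $S=\mathrm{diag}(0,1)$; then $\|T+S\|=1$ while $\|T\|+\|S\|=2$. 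Had that identity been true, the lemma itself would read $\omega_{\mathbb{A}}(M)=\tfrac12\left(\|T\|_A+\|S\|_A\right)$, which the same example refutes, since there $\omega_{\mathbb{A}}(M)=\tfrac12$. The phrase ``quadratic forms add constructively'' is not a substitute for this missing inequality.

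The missing ingredient is Cauchy--Schwarz for the positive sesquilinear forms $(u,v)\mapsto\langle Tu,v\rangle_A$ and $(u,v)\mapsto\langle Su,v\rangle_A$: for $\|x\|_A=\|y\|_A=1$,
\begin{align*}
\left|\langle(e^{i\theta}T+e^{-i\theta}S)x,y\rangle_A\right|
&\le \langle Tx,x\rangle_A^{1/2}\langle Ty,y\rangle_A^{1/2}+\langle Sx,x\rangle_A^{1/2}\langle Sy,y\rangle_A^{1/2}\\
&\le \langle (T+S)x,x\rangle_A^{1/2}\,\langle (T+S)y,y\rangle_A^{1/2}\;\le\;\|T+S\|_A,
\end{align*}
where the second step is Cauchy--Schwarz in $\mathbb{R}^2$; taking the supremum over such $x,y$ gives $\|e^{i\theta}T+e^{-i\theta}S\|_A\le\|T+S\|_A$ and closes your argument. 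Note that once you have this inequality, the detour through \eqref{zm}, \eqref{diez2} and Lemma \ref{lem02} is unnecessary: applying the same two Cauchy--Schwarz steps directly to $\langle M(x_1,x_2),(x_1,x_2)\rangle_{\mathbb{A}}=\langle Tx_2,x_1\rangle_A+\langle Sx_1,x_2\rangle_A$ gives the upper bound, and vectors of the form $\tfrac{1}{\sqrt{2}}(y,y)$ give the lower bound because $T+S$ is $A$-selfadjoint, so $\omega_A(T+S)=\|T+S\|_A$ by Lemma \ref{lem03}. Finally, be aware that the paper states this lemma without proof (it is quoted from \cite{fekip1}), so the comparison here is against a correct completion rather than an in-paper argument.
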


\begin{lemma}$($\cite{faiot}$)$\label{lem02}
Let $T,S\in \mathcal{B}_{A^{1/2}}(\mathcal{H})$. 
 Then,
\begin{itemize}
  \item [(a)] $\omega_{\mathbb{A}}\left[\begin{pmatrix}
T&0\\
0 &S
\end{pmatrix}\right]=\max\{\omega_A(T),\omega_A(S)\}.$
In particular,
\begin{equation}\label{zm1}
\omega_{\mathbb{A}}\left[\begin{pmatrix}
T&0\\
0 &T
\end{pmatrix}\right]=\omega_{\mathbb{A}}\left[\begin{pmatrix}
T&0\\
0 &T^{\sharp_A}
\end{pmatrix}\right]=\omega_A(T).
\end{equation}
  \item [(b)] $\left\|\begin{pmatrix}
0&T\\
S &0
\end{pmatrix}\right\|_{\mathbb{A}}=\left\|\begin{pmatrix}
T&0\\
0 &S
\end{pmatrix}\right\|_{\mathbb{A}}=\max\left\{\|T\|_A,\|S\|_A\right\}.$
\end{itemize}
\end{lemma}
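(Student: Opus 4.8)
The plan is to prove both parts by the same two-sided template: for a generic unit vector $x=(x_1,x_2)\in\mathcal{H}\oplus\mathcal{H}$, i.e.\ one with $\|x\|_{\mathbb{A}}^2=\|x_1\|_A^2+\|x_2\|_A^2=1$, I would evaluate the relevant $\mathbb{A}$-quantity, bound it above by the maximum of the corresponding $A$-quantities via a convexity argument, and then recover the reverse inequality by feeding in vectors supported on a single coordinate. The whole argument rests only on the identity $\langle x,y\rangle_{\mathbb{A}}=\langle x_1,y_1\rangle_A+\langle x_2,y_2\rangle_A$ recorded just before the lemma, together with the homogeneous bounds $|\langle Tx,x\rangle_A|\le\omega_A(T)\|x\|_A^2$ and $\|Tx\|_A\le\|T\|_A\|x\|_A$.

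For part (a), writing $\mathbb{T}=\begin{pmatrix}T&0\\0&S\end{pmatrix}$ one computes $\langle \mathbb{T}x,x\rangle_{\mathbb{A}}=\langle Tx_1,x_1\rangle_A+\langle Sx_2,x_2\rangle_A$. The triangle inequality followed by $|\langle Tx_1,x_1\rangle_A|\le\omega_A(T)\|x_1\|_A^2$ and the analogous bound for $S$ gives $|\langle \mathbb{T}x,x\rangle_{\mathbb{A}}|\le \omega_A(T)\|x_1\|_A^2+\omega_A(S)\|x_2\|_A^2\le\max\{\omega_A(T),\omega_A(S)\}$, using $\|x_1\|_A^2+\|x_2\|_A^2=1$; taking the supremum yields the upper bound. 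For the reverse inequality I would test the unit vectors $(x_1,0)$ and $(0,x_2)$ with $\|x_1\|_A=\|x_2\|_A=1$, for which $\langle \mathbb{T}x,x\rangle_{\mathbb{A}}$ reduces to $\langle Tx_1,x_1\rangle_A$ and $\langle Sx_2,x_2\rangle_A$ respectively; their suprema force $\omega_{\mathbb{A}}(\mathbb{T})\ge\omega_A(T)$ and $\omega_{\mathbb{A}}(\mathbb{T})\ge\omega_A(S)$, hence $\ge\max\{\omega_A(T),\omega_A(S)\}$. The special case $S=T$ in \eqref{zm1} is immediate; for $S=T^{\sharp_A}$ I would invoke $\langle T^{\sharp_A}x,x\rangle_A=\overline{\langle Tx,x\rangle_A}$, which follows from the defining property of the $A$-adjoint, giving $\omega_A(T^{\sharp_A})=\omega_A(T)$ and collapsing the maximum to $\omega_A(T)$.

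Part (b) runs identically with quadratic forms replaced by norms. For the anti-diagonal block one has $\|\begin{pmatrix}0&T\\S&0\end{pmatrix}x\|_{\mathbb{A}}^2=\|Tx_2\|_A^2+\|Sx_1\|_A^2$, and the submultiplicative bounds give $\le\|T\|_A^2\|x_2\|_A^2+\|S\|_A^2\|x_1\|_A^2\le\max\{\|T\|_A^2,\|S\|_A^2\}$; the diagonal block is handled by the same computation with the roles of $x_1,x_2$ interchanged. Testing one-coordinate unit vectors again produces the matching lower bounds $\|T\|_A$ and $\|S\|_A$, so both blocks share the common value $\max\{\|T\|_A,\|S\|_A\}$.

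The argument has no genuine obstacle; the only point requiring care is the degenerate case $\|x_i\|_A=0$, where the homogeneous bound $|\langle Tx,x\rangle_A|\le\omega_A(T)\|x\|_A^2$ must be justified directly. This follows from the semi-inner product Cauchy--Schwarz inequality $|\langle Tx,x\rangle_A|\le\|Tx\|_A\|x\|_A$, which forces both sides to vanish. I would also note that genuine unit vectors for $\|\cdot\|_A$ exist because $A\neq 0$, so each defining supremum is taken over a nonempty set.
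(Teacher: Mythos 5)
Your proof is correct. There is, however, nothing in the paper to compare it against: Lemma \ref{lem02} is stated with a citation to \cite{faiot} and is not proved in this article, so your argument is a genuinely self-contained addition rather than a variant of an internal proof. In the cited line of work such block identities are typically obtained by transporting the operator matrix to the Hilbert space $\mathbf{R}(A^{1/2})$ via the correspondence $T\mapsto\widetilde{T}$ (the device the present paper itself uses in its Section 3 theorems) and then quoting the classical Hilbert-space facts for diagonal and anti-diagonal $2\times 2$ matrices; your route instead works directly with the semi-inner product $\langle x,y\rangle_{\mathbb{A}}=\langle x_1,y_1\rangle_A+\langle x_2,y_2\rangle_A$, which is more elementary, avoids the de Branges--Rovnyak machinery entirely, and is valid verbatim whether or not the quantities involved are finite. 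Two details deserve the care you gave them: the degenerate case $\|x_i\|_A=0$, where homogeneity alone does not apply and the semi-inner product Cauchy--Schwarz inequality $|\langle Tx,x\rangle_A|\le\|Tx\|_A\|x\|_A$ is the right fix, and the existence of $\|\cdot\|_A$-unit vectors from $A\neq 0$, without which the lower bounds via one-coordinate test vectors would be vacuous. One small remark: in \eqref{zm1} the expression $T^{\sharp_A}$ only makes sense when $T\in\mathcal{B}_A(\mathcal{H})$ (not merely $T\in\mathcal{B}_{A^{1/2}}(\mathcal{H})$ as in the lemma's hypothesis); your identity $\langle T^{\sharp_A}x,x\rangle_A=\overline{\langle Tx,x\rangle_A}$, hence $\omega_A(T^{\sharp_A})=\omega_A(T)$, is exactly right under that implicit assumption, which is satisfied wherever the lemma is invoked in the paper (e.g.\ in the proof of Theorem \ref{thk}).
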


Now we are in a position to obtain the following inequality.
\begin{theorem}\label{thk}
Let $T, S\in\mathcal{B}_{A}(\mathcal{H})$. Then,
\begin{align}\label{r1}
\omega_A(TS) \leq \frac{1}{2}\omega_A(ST)+\frac{1}{4}\Big(\|T\|_A\|S\|_A+\|TS\|_A\Big).
\end{align}
\end{theorem}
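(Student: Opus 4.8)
The plan is to tie $TS$ and $ST$ together through the block framework set up just above the statement. Working on $\mathcal H\oplus\mathcal H$ with $\mathbb A=\begin{pmatrix}A&0\\0&A\end{pmatrix}$, I would put $B=\begin{pmatrix}0&T\\ S&0\end{pmatrix}$, so that $B^2=\begin{pmatrix}TS&0\\0&ST\end{pmatrix}$ carries both products at once, while \eqref{diez2} gives $B^{\sharp_{\mathbb A}}=\begin{pmatrix}0&S^{\sharp_A}\\ T^{\sharp_A}&0\end{pmatrix}$. Lemma \ref{lem02} then lets me read off $\|B\|_{\mathbb A}=\max\{\|T\|_A,\|S\|_A\}$ and, for the diagonal block, $\omega_{\mathbb A}\!\left[\begin{pmatrix}TS&0\\0&0\end{pmatrix}\right]=\omega_A(TS)$. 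The point of this reduction is that Lemma \ref{lem01} (in the $A$-positive case) and Lemma \ref{lem02} evaluate the relevant block quantities \emph{exactly}, which is precisely what one needs in order not to leak constants.

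Next I would isolate the mechanism behind each of the three terms. For the $\tfrac12\omega_A(ST)$ term the natural device is a Buzano-type estimate in the semi-inner product: for a unit vector $x$ write $\langle TSx,x\rangle_A=\langle Sx,T^{\sharp_A}x\rangle_A$ and resolve $Sx$ and $T^{\sharp_A}x$ along $x$, obtaining $\langle TSx,x\rangle_A=\langle Sx,x\rangle_A\langle Tx,x\rangle_A+\langle u,v\rangle_A$ with $u,v$ the components $A$-orthogonal to $x$; applying Buzano to $\langle Sx,x\rangle_A\langle Tx,x\rangle_A$ is what produces the factor $\tfrac12$ in front of $|\langle STx,x\rangle_A|\le\omega_A(ST)$. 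The seminorm terms $\tfrac14\|T\|_A\|S\|_A$ and $\tfrac14\|TS\|_A$ should come from the remainder: the nilpotent block $\begin{pmatrix}0&TS\\0&0\end{pmatrix}$ has $\mathbb A$-numerical radius exactly $\tfrac12\|TS\|_A$ (via \eqref{zm} and Lemma \ref{lem02}(b)), which is the likely origin of the $\|TS\|_A$ contribution, and \eqref{diez} (namely $\|X^{\sharp_A}\|_A=\|X\|_A$ and $\|TS\|_A=\|S^{\sharp_A}T^{\sharp_A}\|_A$) can be used to symmetrize the $A$-adjoints that appear along the way.

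The hard part will be the bookkeeping of the seminorm remainder so that it collapses to \emph{exactly} $\tfrac14(\|T\|_A\|S\|_A+\|TS\|_A)$. The difficulty is genuine: if one discards the $A$-orthogonal defects $\|Sx\|_A^2-|\langle Sx,x\rangle_A|^2$ and $\|T^{\sharp_A}x\|_A^2-|\langle Tx,x\rangle_A|^2$ and estimates crudely, the Cauchy--Schwarz bound only returns the trivial $\omega_A(TS)\le\|T\|_A\|S\|_A$, overshooting the target constant by a fixed factor. To reach the constant $\tfrac14$ one must retain these defect terms and let the $\omega_A(ST)$ contribution absorb most of the seminorm cost, which is exactly the place where the sharp equalities of Lemmas \ref{lem01}--\ref{lem02} (rather than the triangle inequality) have to be invoked at the block level. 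I would then take the supremum over unit $x$ (equivalently over $\theta$ through \eqref{zm}), and finally sanity-check the resulting bound against the trivial estimate $\omega_A(TS)\le 2\|T\|_A\omega_A(S)$ and against the classical specialization $A=I$.
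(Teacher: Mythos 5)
Your plan has a genuine gap at exactly the point you flag as ``the hard part,'' and it is not a bookkeeping issue: the pointwise Buzano mechanism cannot produce the constant $\tfrac14$. Concretely, your decomposition gives, for $\|x\|_A=1$,
\[
|\langle TSx,x\rangle_A|\;\le\;\tfrac12|\langle STx,x\rangle_A|+\underbrace{\tfrac12\|Tx\|_A\|S^{\sharp_A}x\|_A+\|u\|_A\|v\|_A}_{=:R(x)},
\]
and to conclude by taking suprema you would need $R(x)\le\tfrac14\big(\|T\|_A\|S\|_A+\|TS\|_A\big)$, at least at vectors where $|\langle TSx,x\rangle_A|$ is close to its supremum; nothing in your plan controls where that supremum sits. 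In fact the bound on $R(x)$ is simply false: take $A=I$, $T=S=\begin{pmatrix}0&1\\0&0\end{pmatrix}$ and $x=\tfrac{1}{\sqrt2}(1,1)$. Then $\|Tx\|=\|S^{*}x\|=\tfrac{1}{\sqrt2}$ and $\|u\|=\|v\|=\tfrac12$, so $R(x)=\tfrac12$, while $\tfrac14(\|T\|\|S\|+\|TS\|)=\tfrac14$ and $\omega(ST)=0$, so there is nothing for the $\omega_A(ST)$ term to ``absorb.'' (The theorem survives here only because $\langle TSx,x\rangle=0$ at this particular $x$; the proof scheme does not.) Your remaining ingredients do not fill this hole: the operator $B=\begin{pmatrix}0&T\\S&0\end{pmatrix}$ with $B^2=\begin{pmatrix}TS&0\\0&ST\end{pmatrix}$ is never actually exploited --- the power inequality for $\omega_{\mathbb A}$ returns at best the trivial $\omega_A(TS)\le\|T\|_A\|S\|_A$ after scaling --- and Lemmas \ref{lem01} and \ref{lem02} do evaluate block quantities exactly, but your plan never identifies the block operator to which they should be applied.

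The missing engine is the commutation property of the $A$-\emph{spectral} radius \eqref{commut}, not any numerical-radius or Buzano identity. The paper writes $\|\Re_A(e^{i\theta}TS)\|_A=\tfrac12\, r_A\big(e^{i\theta}TS+e^{-i\theta}S^{\sharp_A}T^{\sharp_A}\big)$ via Lemma \ref{lem03}, embeds this operator as the $(1,1)$ entry of a $2\times2$ operator matrix, factors that matrix as a product $XY$ of two block matrices, and replaces $r_{\mathbb A}(XY)$ by $r_{\mathbb A}(YX)$; the swapped product has diagonal $\big(e^{i\theta}ST,\,e^{-i\theta}T^{\sharp_A}S^{\sharp_A}\big)$ and $A$-positive off-diagonal entries $SS^{\sharp_A}$, $T^{\sharp_A}T$, so \eqref{rawa}, Lemma \ref{lem02}(a) and Lemma \ref{lem01} yield $\omega_A(TS)\le\tfrac12\omega_A(ST)+\tfrac14\|SS^{\sharp_A}+T^{\sharp_A}T\|_A$: this is where both $\tfrac12$ and $\tfrac14$ really come from. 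Two further steps absent from your plan are then needed. First, the homogenization $T\mapsto\sqrt{\|S\|_A/\|T\|_A}\,T$, $S\mapsto\sqrt{\|T\|_A/\|S\|_A}\,S$, which leaves $TS$ and $ST$ unchanged; it is essential, since with $AS=0\neq AT$ the unscaled remainder $\tfrac14\|T^{\sharp_A}T+SS^{\sharp_A}\|_A=\tfrac14\|T\|_A^2$ already exceeds $\tfrac14(\|T\|_A\|S\|_A+\|TS\|_A)=0$. Second, a second factor-and-swap spectral-radius argument showing $\big\|\tfrac{\|S\|_A}{\|T\|_A}T^{\sharp_A}T+\tfrac{\|T\|_A}{\|S\|_A}SS^{\sharp_A}\big\|_A\le\|T\|_A\|S\|_A+\|TS\|_A$; there $\|TS\|_A$ enters as the $\mathbb A$-\emph{norm} of the off-diagonal part of an $\mathbb A$-selfadjoint matrix, via Lemma \ref{lem02}(b) and \eqref{diez}, not as the $\mathbb A$-numerical radius of the nilpotent block $\begin{pmatrix}0&TS\\0&0\end{pmatrix}$ you suggest.
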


\begin{proof}
Let $\theta\in \mathbb{R}$. Since $\Re_A(e^{i \theta}TS)$ is an $A$-selfadjoint operator, so by Lemma \ref{lem03} we have,
\begin{align}\label{18mar}
\|\Re_A(e^{i \theta}TS)\|_A
& =r_A[\Re_A(e^{i \theta}TS)] \nonumber\\
 &=\tfrac{1}{2}r_A(e^{i \theta}TS+e^{-i \theta}S^{\sharp_A}T^{\sharp_A}).
\end{align}
On the other hand, we have
\begin{align*}
r_A(e^{i \theta}TS+e^{-i \theta}S^{\sharp_A}T^{\sharp_A})
& =r_{\mathbb{A}}\left[ \begin{pmatrix}
e^{i \theta}TS+e^{-i \theta}S^{\sharp_A}T^{\sharp_A} &0\\
0 &0
\end{pmatrix}\right] \\
 &=r_{\mathbb{A}}\left[ \begin{pmatrix}
e^{i \theta}T &S^{\sharp_A}\\
0 &0
\end{pmatrix}
\begin{pmatrix}
S&0\\
e^{-i \theta}T^{\sharp_A}  &0
\end{pmatrix}
\right] \\
 &=r_{\mathbb{A}}\left[
\begin{pmatrix}
S&0\\
e^{-i \theta}T^{\sharp_A}  &0
\end{pmatrix}
\begin{pmatrix}
e^{i \theta}T &S^{\sharp_A}\\
0 &0
\end{pmatrix}
\right] \quad (\text{ by }\; \eqref{commut}) \\
&=r_{\mathbb{A}}\left[
\begin{pmatrix}
e^{i \theta}ST&SS^{\sharp_A}\\
T^{\sharp_A}T  &e^{-i \theta}T^{\sharp_A}S^{\sharp_A}
\end{pmatrix}\right].
\end{align*}
By applying \eqref{rawa} we get
\begin{align*}
r_A(e^{i \theta}TS+e^{-i \theta}S^{\sharp_A}T^{\sharp_A})
&\leq\omega_{\mathbb{A}}\left[
\begin{pmatrix}
e^{i \theta}ST&SS^{\sharp_A}\\
T^{\sharp_A}T  &e^{-i \theta}T^{\sharp_A}S^{\sharp_A}
\end{pmatrix}\right]\\
&\leq\omega_{\mathbb{A}}\left[
\begin{pmatrix}
e^{i \theta}ST&0\\
0  &e^{-i \theta}T^{\sharp_A}S^{\sharp_A}
\end{pmatrix}\right]
+\omega_{\mathbb{A}}\left[
\begin{pmatrix}
0&SS^{\sharp_A}\\
T^{\sharp_A}T  &0
\end{pmatrix}\right]\\
&=\omega_A(ST)+\frac{1}{2}\|SS^{\sharp_A}+T^{\sharp_A}T\|_A,
\end{align*}
where the last equality follows from Lemma \ref{lem01} together with \eqref{zm1}. Therefore, from \eqref{18mar} we get,
\begin{align*}
\|\Re_A(e^{i \theta}TS)\|_A\leq \tfrac{1}{2}\omega_A(ST)+\tfrac{1}{4}\|SS^{\sharp_A}+T^{\sharp_A}T\|_A.
\end{align*}
 Hence, by taking supremum over all $\theta\in \mathbb{R}$ and then using \eqref{zm} we get,
\begin{align}\label{subst2}
\omega_A(TS)\leq \tfrac{1}{2}\omega_A(ST)+\tfrac{1}{4}\|SS^{\sharp_A}+T^{\sharp_A}T\|_A,
\end{align}
If $AT=0$ or $AS=0$, then the inequality \eqref{r1} holds trivially. Assume that $AT\neq0$ and $AS\neq0$. By Replacing $T$ and $S$ by $\sqrt{\frac{\|S\|_A}{\|T\|_A}}T$ and $\sqrt{\frac{\|T\|_A}{\|S\|_A}}S$, respectively, in \eqref{subst2} we obtain,
\begin{align}\label{jjjj}
\omega_A(TS)\leq \frac{1}{2}\omega_A(ST)+\frac{1}{4}\left\|\tfrac{\|S\|_A}{\|T\|_A}T^{\sharp_A}T+\tfrac{\|T\|_A}{\|S\|_A}SS^{\sharp_A}\right\|_A.
\end{align}
It is easy to see  that the operator $\tfrac{\|S\|_A}{\|T\|_A}T^{\sharp_A}T+\tfrac{\|T\|_A}{\|S\|_A}SS^{\sharp_A}$ is  $A$-positive. So, an application of Lemma \ref{lem03} gives
\begin{equation}\label{bib1}
\left\|\tfrac{\|S\|_A}{\|T\|_A}T^{\sharp_A}T+\tfrac{\|T\|_A}{\|S\|_A}SS^{\sharp_A}\right\|_A=
r_A\left(\tfrac{\|S\|_A}{\|T\|_A}T^{\sharp_A}T+\tfrac{\|T\|_A}{\|S\|_A}SS^{\sharp_A}\right).
\end{equation}
Next,
\begin{align*}
r_A\left(\tfrac{\|S\|_A}{\|T\|_A}T^{\sharp_A}T+\tfrac{\|T\|_A}{\|S\|_A}SS^{\sharp_A}\right)
& =r_{\mathbb{A}}\left[ \begin{pmatrix}
\tfrac{\|S\|_A}{\|T\|_A}T^{\sharp_A}T+\tfrac{\|T\|_A}{\|S\|_A}SS^{\sharp_A} &0\\
0 &0
\end{pmatrix}\right] \nonumber\\
 &=r_{\mathbb{A}}\left[ \begin{pmatrix}
\sqrt{\tfrac{\|S\|_A}{\|T\|_A}}T^{\sharp_A} &\sqrt{\tfrac{\|T\|_A}{\|S\|_A}}S\\
0 &0
\end{pmatrix} \begin{pmatrix}
\sqrt{\tfrac{\|S\|_A}{\|T\|_A}}T&0\\
\sqrt{\tfrac{\|T\|_A}{\|S\|_A}}S^{\sharp_A} &0
\end{pmatrix}\right]\nonumber\\
\end{align*}
Further, by applying \eqref{commut}, we get
\begin{align}\label{bib2}
r_A\left(\tfrac{\|S\|_A}{\|T\|_A}T^{\sharp_A}T+\tfrac{\|T\|_A}{\|S\|_A}SS^{\sharp_A}\right)
 &=r_{\mathbb{A}}\left[ \begin{pmatrix}
\sqrt{\tfrac{\|S\|_A}{\|T\|_A}}T&0\\
\sqrt{\tfrac{\|T\|_A}{\|S\|_A}}S^{\sharp_A} &0
\end{pmatrix}\begin{pmatrix}
\sqrt{\tfrac{\|S\|_A}{\|T\|_A}}T^{\sharp_A} &\sqrt{\tfrac{\|T\|_A}{\|S\|_A}}S\\
0 &0
\end{pmatrix} \right] \nonumber\\
& =r_{\mathbb{A}}\left[\begin{pmatrix}
\tfrac{\|S\|_A}{\|T\|_A}TT^{\sharp_A} &  TS\\
S^{\sharp_A}T^{\sharp_A}    &  \tfrac{\|T\|_A}{\|S\|_A}S^{\sharp_A}S
\end{pmatrix}\right].
\end{align}
In addition, we see that
\begin{align*}
\mathbb{A}\begin{pmatrix}
\tfrac{\|S\|_A}{\|T\|_A}TT^{\sharp_A} &  TS\\
S^{\sharp_A}T^{\sharp_A}    &  \tfrac{\|T\|_A}{\|S\|_A}S^{\sharp_A}S
\end{pmatrix}
& = \begin{pmatrix}
\tfrac{\|S\|_A}{\|T\|_A}ATT^{\sharp_A} &  ATS\\
AS^{\sharp_A}T^{\sharp_A}    &  \tfrac{\|T\|_A}{\|S\|_A}AS^{\sharp_A}S
\end{pmatrix}\\
 &= \begin{pmatrix}
\tfrac{\|S\|_A}{\|T\|_A}(TT^{\sharp_A})^*A & (S^{\sharp_A}T^{\sharp_A})^*A \\
(TS)^*A    &  \tfrac{\|T\|_A}{\|S\|_A}(S^{\sharp_A}S)^*A
\end{pmatrix}\\
 &=\begin{pmatrix}
\tfrac{\|S\|_A}{\|T\|_A}TT^{\sharp_A} &  TS\\
S^{\sharp_A}T^{\sharp_A}    &  \tfrac{\|T\|_A}{\|S\|_A}S^{\sharp_A}S
\end{pmatrix}^*\mathbb{A}.
\end{align*}
This implies that, $\begin{pmatrix}
\tfrac{\|S\|_A}{\|T\|_A}TT^{\sharp_A} &  TS\\
S^{\sharp_A}T^{\sharp_A}    &  \tfrac{\|T\|_A}{\|S\|_A}S^{\sharp_A}S
\end{pmatrix}$ is an $\mathbb{A}$-selfadjoint operator. Hence, in view of Lemma \ref{lem03} we have,
\begin{equation}\label{bib3}
\left\|\begin{pmatrix}
\tfrac{\|S\|_A}{\|T\|_A}TT^{\sharp_A} &  TS\\
S^{\sharp_A}T^{\sharp_A}    &  \tfrac{\|T\|_A}{\|S\|_A}S^{\sharp_A}S
\end{pmatrix}\right\|_{\mathbb{A}}=r_{\mathbb{A}}\left[\begin{pmatrix}
\tfrac{\|S\|_A}{\|T\|_A}TT^{\sharp_A} &  TS\\
S^{\sharp_A}T^{\sharp_A}    &  \tfrac{\|T\|_A}{\|S\|_A}S^{\sharp_A}S
\end{pmatrix} \right].
\end{equation}
So, it follows from \eqref{bib1}, \eqref{bib2} and \eqref{bib3} that
$$
\left\|\tfrac{\|S\|_A}{\|T\|_A}T^{\sharp_A}T+\tfrac{\|T\|_A}{\|S\|_A}SS^{\sharp_A}\right\|_A
= \left\|\begin{pmatrix}
\tfrac{\|S\|_A}{\|T\|_A}TT^{\sharp_A} &  TS\\
S^{\sharp_A}T^{\sharp_A}    &  \tfrac{\|T\|_A}{\|S\|_A}S^{\sharp_A}S
\end{pmatrix}\right\|_{\mathbb{A}}.
$$
Finally, by applying the triangle inequality and then using Lemma \ref{lem02}, we get
\begin{align*}
\left\|\tfrac{\|S\|_A}{\|T\|_A}T^{\sharp_A}T+\tfrac{\|T\|_A}{\|S\|_A}SS^{\sharp_A}\right\|_A
 &\leq\left\|\begin{pmatrix}
\tfrac{\|S\|_A}{\|T\|_A}TT^{\sharp_A} & 0\\
0    &  \tfrac{\|T\|_A}{\|S\|_A}S^{\sharp_A}S
\end{pmatrix}\right\|_{\mathbb{A}}
+\left\|\begin{pmatrix}
0 &  TS\\
S^{\sharp_A}T^{\sharp_A}    &  0
\end{pmatrix}\right\|_{\mathbb{A}}\\
 &=\max\left\{\tfrac{\|S\|_A}{\|T\|_A}\|TT^{\sharp_A}\|_A,\tfrac{\|T\|_A}{\|S\|_A}\|S^{\sharp_A}S\|_A \right\}+\|TS\|_A\\
  &=\|S\|_A\| T\|_A+\|TS\|_A.
\end{align*}
Therefore, we get \eqref{r1} as desired by taking \eqref{jjjj} into account.
\end{proof}
\begin{remark}
By taking $A=I$ in Theorem \ref{thk} we get a recent result proved by Kittaneh et al. in \cite{A.K.1}.
\end{remark}

The following corollary is an immediate consequence of Theorem \ref{thk}.
\begin{corollary}
Let $T, S\in\mathcal{B}_{A}(\mathcal{H})$. Then
\begin{align}\label{rcc1}
\omega_A(TS) \leq \frac{1}{2}\left(\omega_A(ST)+\|T\|_A\|S\|_A\right).
\end{align}
\end{corollary}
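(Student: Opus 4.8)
The plan is to deduce the bound \eqref{rcc1} directly from the sharper inequality \eqref{r1} of Theorem \ref{thk}, so that essentially no new analytic work is required. The only additional ingredient is the submultiplicativity of the $A$-operator seminorm recorded in the Introduction: for any two operators $T,S\in\mathcal{B}_{A^{1/2}}(\mathcal{H})$ one has $\|TS\|_A\leq\|T\|_A\|S\|_A$. Since $\mathcal{B}_A(\mathcal{H})\subseteq\mathcal{B}_{A^{1/2}}(\mathcal{H})$, this estimate applies verbatim to the operators $T,S\in\mathcal{B}_A(\mathcal{H})$ under consideration.

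First I would start from \eqref{r1}, namely
\[
\omega_A(TS)\leq\frac{1}{2}\omega_A(ST)+\frac{1}{4}\big(\|T\|_A\|S\|_A+\|TS\|_A\big),
\]
and then replace the term $\|TS\|_A$ on the right-hand side by the larger quantity $\|T\|_A\|S\|_A$. This is legitimate because enlarging a summand inside an upper bound preserves the inequality. The bracketed expression then becomes $\tfrac{1}{4}\big(\|T\|_A\|S\|_A+\|T\|_A\|S\|_A\big)=\tfrac{1}{2}\|T\|_A\|S\|_A$, whence
\[
\omega_A(TS)\leq\frac{1}{2}\omega_A(ST)+\frac{1}{2}\|T\|_A\|S\|_A=\frac{1}{2}\big(\omega_A(ST)+\|T\|_A\|S\|_A\big),
\]
which is precisely \eqref{rcc1}.

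Since the corollary is merely a one-line weakening of Theorem \ref{thk}, there is no genuine obstacle to surmount: all the substantive content---the passage to the block operator matrix governed by $\mathbb{A}$, the use of the commutativity property \eqref{commut} of the $A$-spectral radius, and the scaling argument replacing $T,S$ by suitable multiples---has already been absorbed into the proof of \eqref{r1}. The one point worth stating explicitly is the direction of the substitution: because $\|TS\|_A\leq\|T\|_A\|S\|_A$, the step produces a valid inequality rather than an identity, and consequently \eqref{rcc1} is a strictly coarser estimate than \eqref{r1} whenever $\|TS\|_A<\|T\|_A\|S\|_A$.
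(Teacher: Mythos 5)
Your proof is correct and coincides with the paper's own (implicit) argument: the paper presents this corollary as an immediate consequence of Theorem \ref{thk}, and the intended step is exactly your substitution of $\|TS\|_A$ by the larger quantity $\|T\|_A\|S\|_A$, justified by the submultiplicativity $\|TS\|_A\leq\|T\|_A\|S\|_A$ valid on $\mathcal{B}_{A^{1/2}}(\mathcal{H})\supseteq\mathcal{B}_A(\mathcal{H})$.
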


Next, we obtain the following inequalities assuming $T$ to be $A$-positive.

\begin{theorem}\label{theo3prod}
Let $T, S\in\mathcal{B}_{A^{1/2}}(\mathcal{H})$. If $T$ is $A$-positive, then
\begin{align*}
\omega_A(TS) \leq \|T\|_A  \omega_A(S)~~\,\,\,\mbox{and}~~\,\,\,\omega_A(ST) \leq \|T\|_A  \omega_A(S).
\end{align*}

\end{theorem}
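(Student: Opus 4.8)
The plan is to use the $A$-positivity of $T$ to replace the bare product $TS$ by a symmetric, ``sandwiched'' expression that can be controlled directly by $\omega_A(S)$. First I would record that an $A$-positive $T$ is in particular $A$-selfadjoint, so $T\in\mathcal{B}_A(\mathcal{H})$, the operator $T^{\sharp_A}=P_{\overline{\mathcal{R}(A)}}T$ acts as $T$ with respect to $\langle\cdot,\cdot\rangle_A$, and consequently $\langle TSx,x\rangle_A=\langle Sx,Tx\rangle_A$ for every $x\in\mathcal{H}$. The two asserted inequalities are symmetric (one is obtained from the other by passing to $S^{\sharp_A}$ and using $\omega_A(S^{\sharp_A})=\omega_A(S)$), so it suffices to bound $\omega_A(TS)$. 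Since $AT\geq 0$, I would then pass to the Hilbert space $\mathbf{R}(A^{1/2})$, take the genuine positive square root there, and lift it back to an $A$-positive, $A$-selfadjoint operator $R=T^{1/2}$ with $R^2=T$ in the $A$-sense and, by \eqref{diez} together with Lemma \ref{lem03}, $\|R\|_A^2=\|R^2\|_A=\|T\|_A$.

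The clean estimate this setup delivers concerns the sandwiched operator $RSR$. Indeed, for every $y$ with $\|y\|_A=1$ one has $|\langle RSR\,y,y\rangle_A|=|\langle S(Ry),Ry\rangle_A|\leq \omega_A(S)\|Ry\|_A^2\leq \omega_A(S)\|R\|_A^2$, and taking the supremum over such $y$ gives $\omega_A(RSR)\leq \|R\|_A^2\,\omega_A(S)=\|T\|_A\,\omega_A(S)$. Thus the target bound holds verbatim for $\omega_A(RSR)$, and the whole problem is reduced to comparing $\omega_A(TS)$ with $\omega_A(RSR)$.

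The hard part is precisely this comparison. Writing $T=R\cdot R$ and invoking \eqref{zm} with Lemma \ref{lem03} gives $\omega_A(TS)=\tfrac12\sup_{\theta}r_A\big(e^{i\theta}R(RS)+e^{-i\theta}(RS)^{\sharp_A}R\big)$, and the commutativity property \eqref{commut} does move a factor $R$ across the product at the level of the $A$-spectral radius, so that $r_A(TS)=r_A(RSR)$. The obstruction is that the $A$-numerical radius is \emph{not} invariant under this rearrangement: passing from $R(RS)$ to $RSR$ replaces the form $\langle RSx,Rx\rangle_A$ by the sandwiched form $\langle S(Rx),Rx\rangle_A$, and the discrepancy is governed by the commutator $R(RS)-(RS)R$, whose $A$-numerical radius need not drop out. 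So while the inequality is immediate for the symmetric quantity $\omega_A(RSR)$ (equivalently for the $A$-spectral radius $r_A(TS)$), the step that upgrades this to a bound on $\omega_A(TS)$ itself is the delicate crux. I would expect the main work (and the main risk) to lie exactly here: any argument must supply some extra structural input — beyond the $r_A$-commutativity and the triangle-inequality splitting used for Theorem \ref{thk} — that forces $\omega_A(TS)\leq\omega_A(RSR)$ in the numerical-radius seminorm rather than only in the spectral-radius seminorm.
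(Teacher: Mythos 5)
You located the obstruction precisely, and your caution is vindicated in the strongest possible way: the comparison $\omega_A(TS)\leq\omega_A(RSR)$ that your plan needs is false, and so is the theorem itself. Take $A=I$, $T=\begin{pmatrix}1&1\\1&1\end{pmatrix}\geq0$ and $S=\begin{pmatrix}0&1\\0&0\end{pmatrix}$, so that $\|T\|=2$ and $\omega(S)=\tfrac12$. Then $TS=\begin{pmatrix}0&1\\0&1\end{pmatrix}$ and, by \eqref{zm} with $A=I$ and $\theta=0$,
\begin{align*}
\omega(TS)\;\geq\;\left\|\tfrac12\big(TS+(TS)^*\big)\right\|=\left\|\begin{pmatrix}0&\tfrac12\\ \tfrac12&1\end{pmatrix}\right\|=\frac{1+\sqrt{2}}{2}\;>\;1=\|T\|\,\omega(S),
\end{align*}
and the same matrices refute $\omega(ST)\leq\|T\|\,\omega(S)$, so Corollary \ref{c} (and the unproved claim $\omega(TS)\leq\omega(T)\omega(S)$ in the Remark after it) fails as well. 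Moreover, here $R=T^{1/2}=T/\sqrt2$ gives $RSR=\tfrac12 T$, hence $\omega(RSR)=1$: your sandwiched estimate $\omega_A(RSR)\leq\|T\|_A\,\omega_A(S)$ holds (with equality), and it is exactly the upgrade from $RSR$ to $TS$ that breaks. So the two statements you actually proved, $\omega_A(RSR)\leq\|T\|_A\,\omega_A(S)$ and, via \eqref{commut} and \eqref{rawa}, $r_A(TS)\leq\|T\|_A\,\omega_A(S)$, are the correct salvageable content, and no argument can close the gap you flagged. (A secondary caveat in your construction, moot here: $\widetilde{T}^{1/2}\in\mathcal{B}(\mathbf{R}(A^{1/2}))$ need not a priori be of the form $\widetilde{R}$ for some $R\in\mathcal{B}_{A^{1/2}}(\mathcal{H})$, since $T\mapsto\widetilde{T}$ is not surjective.)

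For comparison, the paper's own proof is where the unfixable error sits. It decomposes $TS=(T-\alpha\|T\|_AI)S+\alpha\|T\|_AS$, estimates $\omega_A(TS)\leq\|T-\alpha\|T\|_AI\|_A\|S\|_A+\alpha\|T\|_A\omega_A(S)$, and asserts that $A$-positivity of $T$ yields $\|T-\alpha\|T\|_AI\|_A=(1-\alpha)\|T\|_A$ for \emph{all} $\alpha\in[0,1]$, finally setting $\alpha=1$. Passing to $\widetilde{T}\geq0$ on $\mathbf{R}(A^{1/2})$, whose spectrum lies in $[m,\|T\|_A]$ with $m\geq0$, one has $\|T-\alpha\|T\|_AI\|_A=\max\{(1-\alpha)\|T\|_A,\ \alpha\|T\|_A-m\}$; this equals $(1-\alpha)\|T\|_A$ for $\alpha\in[0,\tfrac12]$, but at $\alpha=1$ the asserted identity forces $m=\|T\|_A$, i.e. $\widetilde{T}$ a scalar multiple of the identity (in the example above, $\|T-\|T\|I\|=2\neq0$). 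The most the method genuinely gives is the case $\alpha=\tfrac12$, namely $\omega_A(TS)\leq\tfrac12\|T\|_A\big(\|S\|_A+\omega_A(S)\big)\leq\tfrac32\|T\|_A\,\omega_A(S)$ by \eqref{refine1} --- precisely the Abu-Omar--Kittaneh bound that Theorem \ref{theo3prod} purports to improve.
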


\begin{proof}
For all $\alpha\in [0,1]$ we have,
\begin{eqnarray*}
\omega_A(TS)&=&\omega_A\left((T-\alpha \|T\|_A I)S+ \alpha \|T\|_A S\right)\\
&\leq&\omega_A\left((T-\alpha \|T\|_A I)S\right)+ \alpha \|T\|_A \omega_A(S)\\
&\leq&\left \| (T-\alpha \|T\|_A I)S\right\|_A+ \alpha \|T\|_A \omega_A(S)\\
&\leq&\left \| T-\alpha \|T\|_A I \right \|_A \| S \|_A+ \alpha \|T\|_A \omega_A(S).
\end{eqnarray*}
Since $T$ is $A$-positive, so we observe that $\left \| T-\alpha \|T\|_A I \right \|_A=(1-\alpha)\|T\|_A$ for all $\alpha\in [0,1]$. Therefore,
\begin{align}\label{pjjj}
\omega_A(TS) \leq \|T\|_A \bigg( (1-\alpha)\|S\|_A+\alpha \omega_A(S) \bigg).
\end{align}
This holds for all $\alpha\in [0,1]$, so
considering $\alpha=1$ in (\ref{pjjj}) we get, $$\omega_A(TS) \leq \|T\|_A  \omega_A(S).$$ Similarly, we can prove that
$$\omega_A(ST) \leq \|T\|_A  \omega_A(S).$$
Thus, we complete the proof.
\end{proof}

Considering $A=I$ in Theorem \ref{theo3prod} we get the following numerical radius inequalities for the product of Hilbert space operators.

\begin{corollary}\label{c}
Let $T,S \in \mathcal{B}(\mathcal{H})$ with $T$ positive. Then,
$$\omega(TS) \leq \|T\|  \omega(S) \,\,\,~~\mbox{and} ~~\,\,\, \omega(ST) \leq \|T\|  \omega(S).$$
\end{corollary}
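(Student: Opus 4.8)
The plan is to obtain Corollary \ref{c} as the special case $A=I$ of Theorem \ref{theo3prod}: when $A=I$ the seminorm $\|\cdot\|_A$ becomes the operator norm, $\omega_A(\cdot)$ becomes the usual numerical radius, $A$-positivity reduces to ordinary positivity, and $\mathcal{B}_{A^{1/2}}(\mathcal{H})=\mathcal{B}(\mathcal{H})$, so formally nothing is needed beyond specialising. To make the mechanism transparent (and to isolate where the difficulty sits) I would instead reprove the first inequality from scratch in the Hilbert-space setting, via the shift-and-optimise device of the theorem: for $\alpha\in[0,1]$ write $TS=(T-\alpha\|T\|I)S+\alpha\|T\|S$, apply subadditivity of $\omega$, and then bound $\omega\big((T-\alpha\|T\|I)S\big)\le \|(T-\alpha\|T\|I)S\|\le \|T-\alpha\|T\|I\|\,\|S\|$, arriving at $\omega(TS)\le \|T-\alpha\|T\|I\|\,\|S\|+\alpha\|T\|\,\omega(S)$.

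The decisive step would then be to evaluate $\|T-\alpha\|T\|I\|$. Since $T\ge 0$, its spectrum lies in $[m,\|T\|]$ with $m:=\min\sigma(T)\ge 0$, so $T-\alpha\|T\|I$ is self-adjoint with spectrum in $[\,m-\alpha\|T\|,\ (1-\alpha)\|T\|\,]$, whence $\|T-\alpha\|T\|I\|=\max\{(1-\alpha)\|T\|,\ |m-\alpha\|T\||\}$. One would \emph{like} this to equal $(1-\alpha)\|T\|$, so that substituting and letting $\alpha\to 1$ yields the claimed $\omega(TS)\le\|T\|\,\omega(S)$.

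The hard part — and here I expect the argument to break down — is exactly this identification. The equality $\|T-\alpha\|T\|I\|=(1-\alpha)\|T\|$ holds only for $\alpha\le \tfrac12+\tfrac{m}{2\|T\|}$; once $\alpha$ passes this threshold the lower endpoint $\alpha\|T\|-m$ dominates, so pushing to $\alpha=1$ is unjustified unless $m=\|T\|$, i.e. unless $T$ is scalar on its support. At the largest safely usable value $\alpha=\tfrac12$ the method yields only $\omega(TS)\le\tfrac12\|T\|\big(\|S\|+\omega(S)\big)$, which is strictly weaker. In fact, absent an extra hypothesis such as $TS=ST$, the inequality as literally stated cannot hold: taking $T=\begin{pmatrix}1&0\\0&0\end{pmatrix}\ge 0$ and $S=\begin{pmatrix}1&2\\0&-1\end{pmatrix}$ gives $TS=\begin{pmatrix}1&2\\0&0\end{pmatrix}$ with $\omega(TS)=\tfrac{1+\sqrt5}{2}\approx 1.618$, while $\|T\|=1$ and $\omega(S)=\sqrt2\approx 1.414$, so $\omega(TS)>\|T\|\,\omega(S)$. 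Consequently my realistic plan would be either to retreat to the valid $\alpha=\tfrac12$ estimate, or to add commutativity of $T$ and $S$ (under which the spectrum-shift computation together with the positivity of $T$ does close the argument), rather than to establish the bound in the unrestricted form.
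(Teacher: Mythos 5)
Your diagnosis is correct, and the defect you located is in the paper itself, not in your argument. The paper derives Corollary \ref{c} exactly as you anticipated, as the specialisation $A=I$ of Theorem \ref{theo3prod}, and the proof of that theorem is precisely the shift-and-optimise argument you reconstructed: it writes $TS=(T-\alpha\|T\|_A I)S+\alpha\|T\|_A S$, bounds $\omega_A(TS)\le\left\|T-\alpha\|T\|_A I\right\|_A\|S\|_A+\alpha\|T\|_A\,\omega_A(S)$, then asserts that $A$-positivity of $T$ implies $\left\|T-\alpha\|T\|_A I\right\|_A=(1-\alpha)\|T\|_A$ for \emph{all} $\alpha\in[0,1]$, and concludes by putting $\alpha=1$. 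That assertion is exactly the step you predicted would fail: for $A=I$ and $T\ge 0$ one has $\left\|T-\alpha\|T\|I\right\|=\max\left\{(1-\alpha)\|T\|,\ \alpha\|T\|-m\right\}$ with $m=\min\sigma(T)$, so the claimed identity holds only for $\alpha\le\tfrac12+\tfrac{m}{2\|T\|}$; at $\alpha=1$ it would force $\|T-\|T\|I\|=0$, which already fails for $T=\operatorname{diag}(1,0)$. The paper offers no justification beyond ``we observe that,'' and its choice $\alpha=1$ is illegitimate; the safely usable value $\alpha=\tfrac12$ yields only $\omega(TS)\le\tfrac12\|T\|\left(\|S\|+\omega(S)\right)\le\tfrac32\|T\|\,\omega(S)$, i.e.\ the Abu-Omar--Kittaneh bound that the paper claims to improve.

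Moreover, your counterexample is valid, so the statement is genuinely false rather than merely unproved. With $T=\begin{pmatrix}1&0\\0&0\end{pmatrix}$ and $S=\begin{pmatrix}1&2\\0&-1\end{pmatrix}$, the product $TS=\begin{pmatrix}1&2\\0&0\end{pmatrix}$ is the rank-one map $x\mapsto\langle x,v\rangle u$ with $u=(1,0)$, $v=(1,2)$, whence $\omega(TS)=\tfrac12\left(|\langle u,v\rangle|+\|u\|\,\|v\|\right)=\tfrac{1+\sqrt5}{2}\approx 1.618$, while $\|T\|=1$ and $\omega(S)=\sqrt2$ (the numerical range of $S$ is the ellipse with foci $\pm1$ and minor semi-axis $1$). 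Hence $\omega(TS)>\|T\|\,\omega(S)$, refuting the first inequality; replacing $S$ by $S^{*}$ refutes the second as well, since $\omega(S^{*}T)=\omega\bigl((TS)^{*}\bigr)=\omega(TS)$ and $\omega(S^{*})=\omega(S)$. Consequently Corollary \ref{c}, Theorem \ref{theo3prod}, and the subsequent Remark claiming an improvement of \cite[Cor. 2.6]{A-OK} all fail; your example shows the best constant $c$ in $\omega(TS)\le c\,\|T\|\,\omega(S)$ for positive $T$ is at least $\tfrac{1+\sqrt5}{2\sqrt2}\approx 1.144$. Your two proposed repairs are also sound: either retreat to the $\alpha=\tfrac12$ estimate, or assume $TS=ST$, in which case $T^{1/2}S=ST^{1/2}$ gives $\langle TSx,x\rangle=\langle ST^{1/2}x,T^{1/2}x\rangle$, so $|\langle TSx,x\rangle|\le\omega(S)\|T^{1/2}x\|^{2}\le\|T\|\,\omega(S)$ for $\|x\|=1$.
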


\begin{remark}
1. We would like to note that the numerical radius $\omega(.)$ satisfies  $\omega(TS)\leq \omega(T)\omega(S)$ if either $T$ or $S$ is positive. \\
2. Abu-Omar and Kittaneh in \cite[Cor. 2.6]{A-OK} obtained that if $T,S \in \mathcal{B}(\mathcal{H})$ with $T$ positive, then $\omega(TS) \leq \frac{3}{2}\|T\|  \omega(S)$. Thus, Corollary \ref{c} is stronger than \cite[Cor. 2.6]{A-OK}.

\end{remark}

\section{\textbf{On inequalities for sum of operators}}\label{s2}

We begin this section with the following lemma.

\begin{lemma}\label{leme1}
For any $x,y,z\in \mathcal{H}$, we have
\begin{align}\label{n03}
|\langle x, y\rangle_A|^2 + |\langle x, z\rangle_A|^2
\leq \|x\|_A^2\Big(\max\{\|y\|_A^2, \|z\|_A^2\} + |\langle y, z\rangle_A|\Big).
\end{align}
\end{lemma}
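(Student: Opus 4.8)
The plan is to prove the inequality
\begin{align*}
|\langle x, y\rangle_A|^2 + |\langle x, z\rangle_A|^2
\leq \|x\|_A^2\Big(\max\{\|y\|_A^2, \|z\|_A^2\} + |\langle y, z\rangle_A|\Big)
\end{align*}
by transporting it into the honest inner-product Hilbert space $\mathbf{R}(A^{1/2})$, where the semi-inner product $\langle\cdot,\cdot\rangle_A$ becomes a genuine inner product, and then invoking a classical mixed Schwarz / norm inequality. Concretely, writing $u=A^{1/2}x$, $v=A^{1/2}y$, $w=A^{1/2}z$, one has $\langle x,y\rangle_A=\langle A^{1/2}x,A^{1/2}y\rangle=\langle u,v\rangle$ and $\|x\|_A=\|u\|$, so the statement reduces to the purely Hilbertian assertion
\begin{align*}
|\langle u, v\rangle|^2 + |\langle u, w\rangle|^2
\leq \|u\|^2\Big(\max\{\|v\|^2, \|w\|^2\} + |\langle v, w\rangle|\Big)
\end{align*}
for vectors $u,v,w$ in an ordinary complex Hilbert space. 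Thus the semi-Hilbertian framing adds nothing essential, and the whole content lies in this three-vector inequality.

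For the Hilbertian inequality itself, I would first dispose of the degenerate case $\|v\|=\|w\|=0$ (then both left-hand terms vanish) and otherwise normalize. The cleanest route is to consider the two-dimensional subspace $M=\operatorname{span}\{v,w\}$ and the orthogonal projection $P_M u$; since $\langle u,v\rangle=\langle P_M u,v\rangle$ and likewise for $w$, we may assume $u\in M$ and hence $\|u\|^2=\|P_M u\|^2\le\|u\|_{\text{original}}^2$, so it suffices to bound $|\langle u,v\rangle|^2+|\langle u,w\rangle|^2$ for $u\in M$. On $M$, consider the rank-$\le 2$ positive operator $G=v\otimes v + w\otimes w$ (the Gram-type operator $Gx=\langle x,v\rangle v+\langle x,w\rangle w$), so that $|\langle u,v\rangle|^2+|\langle u,w\rangle|^2=\langle Gu,u\rangle\le\|G\|\,\|u\|^2$. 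The key step is then to show that the operator norm $\|G\|$ is bounded by $\max\{\|v\|^2,\|w\|^2\}+|\langle v,w\rangle|$. This follows by computing the eigenvalues of $G$ restricted to $M$: its matrix with respect to an appropriate basis has trace $\|v\|^2+\|w\|^2$ and can be diagonalized explicitly, giving largest eigenvalue
\begin{align*}
\|G\|=\frac{\|v\|^2+\|w\|^2}{2}+\sqrt{\Big(\frac{\|v\|^2-\|w\|^2}{2}\Big)^2+|\langle v,w\rangle|^2}.
\end{align*}
One then checks the elementary scalar inequality
\begin{align*}
\frac{a+b}{2}+\sqrt{\Big(\frac{a-b}{2}\Big)^2+c^2}\le\max\{a,b\}+c
\end{align*}
for nonnegative reals $a,b,c$, which is where the bound is actually produced.

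I expect the main obstacle to be this final scalar inequality, which requires a small but genuine argument rather than a one-line Cauchy--Schwarz: setting $m=\max\{a,b\}$ and $d=|a-b|/2$, one rewrites the claim as $\sqrt{d^2+c^2}\le d+c$ after noting $\tfrac{a+b}{2}=m-d$, and $\sqrt{d^2+c^2}\le d+c$ holds because $(d+c)^2=d^2+c^2+2dc\ge d^2+c^2$. A lighter-weight alternative that avoids diagonalizing $G$ altogether is to prove the inequality directly: by the two-term Cauchy--Schwarz one has $|\langle u,v\rangle|^2+|\langle u,w\rangle|^2=|\langle u,\alpha v+\beta w\rangle|^2$ type manipulations, but the cleanest self-contained proof writes $|\langle u,v\rangle|^2+|\langle u,w\rangle|^2\le\|u\|^2\|v\|^2+\|u\|^2\|w\|^2$ and sharpens it; however the truly sharp constant $\max+|\langle v,w\rangle|$ essentially forces the spectral computation above, so I would present the projection-plus-eigenvalue argument as the backbone and relegate the scalar estimate to a one-line verification.
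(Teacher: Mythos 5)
Your proof is correct, and its first step---transporting the inequality into an ordinary Hilbert space by setting $u=A^{1/2}x$, $v=A^{1/2}y$, $w=A^{1/2}z$, so that $\langle x,y\rangle_A=\langle u,v\rangle$ and $\|x\|_A=\|u\|$---is exactly what the paper does. The difference lies in how the resulting Hilbert-space inequality is handled: the paper simply cites it (it is taken from the proof of Theorem 3 of Dragomir's paper on the Euclidean operator radius) and stops there, whereas you prove it from scratch. Your argument, via the positive rank-two operator $Gx=\langle x,v\rangle v+\langle x,w\rangle w$, the identity $|\langle u,v\rangle|^2+|\langle u,w\rangle|^2=\langle Gu,u\rangle\le\|G\|\,\|u\|^2$, the exact formula
\begin{equation*}
\|G\|=\frac{\|v\|^2+\|w\|^2}{2}+\sqrt{\Big(\tfrac{\|v\|^2-\|w\|^2}{2}\Big)^2+|\langle v,w\rangle|^2},
\end{equation*}
and the elementary estimate $\sqrt{d^2+c^2}\le d+c$, is complete and checks out (the projection onto $\mathrm{span}\{v,w\}$ is actually redundant, since $\langle Gu,u\rangle\le\|G\|\,\|u\|^2$ holds for all $u$). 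For comparison, Dragomir's own proof, on which the paper leans, avoids any spectral computation: it writes $|\langle u,v\rangle|^2+|\langle u,w\rangle|^2=\langle u,\langle u,v\rangle v+\langle u,w\rangle w\rangle$, applies Cauchy--Schwarz, and expands $\|\langle u,v\rangle v+\langle u,w\rangle w\|^2$, so your claim that the sharp constant ``forces'' the eigenvalue computation is not accurate. What your route buys is self-containedness and the exact value of $\|G\|$ (hence a slightly stronger intermediate bound); what the paper's route buys is brevity, at the cost of an external citation.
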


\begin{proof}
First note that, by the proof of \cite[Th. 3]{dra1} we have,
\begin{align}\label{zzkk22}
|\langle x, y\rangle|^2 + |\langle x, z\rangle|^2
\leq \|x\|^2\Big(\max\{\|y\|^2, \|z\|^2\} + |\langle y, z\rangle|\Big),
\end{align}
for every $x, y, z\in \mathcal{H}$. Now,
\begin{align*}
|\langle x, y\rangle_A|^2 + |\langle x, z\rangle_A|^2 & = |\langle A^{1/2}x, A^{1/2}y\rangle|^2 + |\langle A^{1/2}x, A^{1/2}z\rangle|^2.
\end{align*}
So, by applying \eqref{zzkk22}, we obtain
\begin{align*}
 |\langle x, y\rangle_A|^2 + |\langle x, z\rangle_A|^2 & \leq \|A^{1/2}x\|^2\Big(\max\{\|A^{1/2}y\|^2, \|A^{1/2}z\|^2\} + |\langle A^{1/2}y, A^{1/2}z\rangle|\Big).
\end{align*}
Hence, we get \eqref{n03} as required.
\end{proof}

Now, we are in a position to prove the following theorem.
\begin{theorem}\label{mai9}
Let $T,S \in \mathcal{B}_A(\mathcal{H})$. Then
\begin{align*}
&\omega_A(T+S)\\
& \le \sqrt{ \frac{1}{2}\Big( \left\| TT^{\sharp_A}+SS^{\sharp_A} \right\|_A+\left\| TT^{\sharp_A}-SS^{\sharp_A} \right\|_A \Big)+\omega_A\left( ST^{\sharp_A} \right)+2\omega_A\left(T\right)\omega_A\left(S\right)}.
\end{align*}
\end{theorem}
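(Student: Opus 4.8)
The plan is to bound $\omega_A(T+S)^2$ directly using the variational definition of the $A$-numerical radius together with Lemma~\ref{leme1}. Fix a unit vector $x\in\mathcal{H}$ with $\|x\|_A=1$, and expand
\[
|\langle (T+S)x,x\rangle_A|^2 \le \big(|\langle Tx,x\rangle_A|+|\langle Sx,x\rangle_A|\big)^2,
\]
but rather than use this crude split I would instead write $\langle (T+S)x,x\rangle_A = \langle Tx,x\rangle_A+\langle Sx,x\rangle_A$ and look for a way to introduce the $A$-adjoints so that Lemma~\ref{leme1} applies. The natural choice is to set $y = T^{\sharp_A}x$ and $z = S^{\sharp_A}x$, so that $\langle Tx,x\rangle_A = \langle x, T^{\sharp_A}x\rangle_A = \langle x,y\rangle_A$ and likewise $\langle Sx,x\rangle_A=\langle x,z\rangle_A$. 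This is the key idea: the cross term and the two squared terms in Lemma~\ref{leme1} are exactly what produce the three pieces on the right-hand side of the claimed bound.

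With this substitution, Lemma~\ref{leme1} gives
\[
|\langle Tx,x\rangle_A|^2+|\langle Sx,x\rangle_A|^2 \le \max\{\|T^{\sharp_A}x\|_A^2,\|S^{\sharp_A}x\|_A^2\}+|\langle T^{\sharp_A}x, S^{\sharp_A}x\rangle_A|,
\]
using $\|x\|_A=1$. Here I would note $\|T^{\sharp_A}x\|_A^2=\langle TT^{\sharp_A}x,x\rangle_A$ and $\|S^{\sharp_A}x\|_A^2=\langle SS^{\sharp_A}x,x\rangle_A$, so the maximum is controlled via $\max\{a,b\}=\tfrac12(a+b)+\tfrac12|a-b|$ by $\tfrac12\big(\|TT^{\sharp_A}+SS^{\sharp_A}\|_A+\|TT^{\sharp_A}-SS^{\sharp_A}\|_A\big)$ after taking a supremum over $x$ (both $TT^{\sharp_A}\pm SS^{\sharp_A}$ being $A$-selfadjoint, their $A$-seminorms equal their $A$-numerical radii by Lemma~\ref{lem03}). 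The cross term satisfies $|\langle T^{\sharp_A}x,S^{\sharp_A}x\rangle_A|=|\langle ST^{\sharp_A}x,x\rangle_A|\le\omega_A(ST^{\sharp_A})$, which supplies the middle summand.

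To reach the full expression I would start from the expansion of $|\langle(T+S)x,x\rangle_A|^2$ as $|\langle Tx,x\rangle_A|^2+|\langle Sx,x\rangle_A|^2+2\Re\big(\overline{\langle Tx,x\rangle_A}\langle Sx,x\rangle_A\big)$, bound the first two terms by the inequality above and the last cross term by $2|\langle Tx,x\rangle_A|\,|\langle Sx,x\rangle_A|\le 2\omega_A(T)\omega_A(S)$. Taking the supremum over all $A$-unit vectors $x$ and then the square root yields the stated inequality. The main obstacle I anticipate is the bookkeeping in converting the pointwise $\max\{\|T^{\sharp_A}x\|_A^2,\|S^{\sharp_A}x\|_A^2\}$ into the seminorm expression $\tfrac12(\|TT^{\sharp_A}+SS^{\sharp_A}\|_A+\|TT^{\sharp_A}-SS^{\sharp_A}\|_A)$: one must pass the supremum through the $\max$ correctly, using that the supremum of $\tfrac12\langle(TT^{\sharp_A}+SS^{\sharp_A})x,x\rangle_A+\tfrac12|\langle(TT^{\sharp_A}-SS^{\sharp_A})x,x\rangle_A|$ is dominated by $\tfrac12\|TT^{\sharp_A}+SS^{\sharp_A}\|_A+\tfrac12\omega_A(TT^{\sharp_A}-SS^{\sharp_A})$ and invoking Lemma~\ref{lem03} to replace the $A$-numerical radii of these $A$-selfadjoint operators by their $A$-seminorms.
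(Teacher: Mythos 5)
Your proposal is correct and follows essentially the same route as the paper's own proof: the same application of Lemma~\ref{leme1} with $y=T^{\sharp_A}x$, $z=S^{\sharp_A}x$, the identity $\max\{a,b\}=\tfrac12(a+b+|a-b|)$, the bound $|\langle T^{\sharp_A}x,S^{\sharp_A}x\rangle_A|=|\langle ST^{\sharp_A}x,x\rangle_A|\le\omega_A(ST^{\sharp_A})$, and Lemma~\ref{lem03} applied to the $A$-selfadjoint operators $TT^{\sharp_A}\pm SS^{\sharp_A}$ before taking the supremum. The bookkeeping concern you raise resolves exactly as you suggest, pointwise in $x$, so there is no gap.
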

\begin{proof}
Recall first that for every $t,s\in \mathbb{R}$ it holds
\begin{equation}\label{r}
\max\{t,s\}=\frac{1}{2}\Big(t+s+|t-s|\Big).
\end{equation}
Now, let $x\in \mathcal{H}$ with $\|x\|_A=1$. Using Lemma \ref{leme1} we get,
\begin{align*}
|\langle (T&+S)x, x\rangle_A|^2\\
&\leq |\langle x, T^{\sharp_A} x\rangle_A|^2+|\langle x, S^{\sharp_A} x\rangle_A|^2+2|\langle Tx, x\rangle_A|\,|\langle Sx, x\rangle_A|\\
&\leq \max\left\{\|T^{\sharp_A} x\|_A^2, \|S^{\sharp_A} x\|_A^2\right\}+|\langle ST^{\sharp_A} x, x\rangle_A|+2|\langle Tx, x\rangle_A|\,|\langle Sx, x\rangle_A|\\
&=\frac{1}{2}\Big(\|T^{\sharp_A} x\|_A^2+ \|S^{\sharp_A} x\|_A^2+\left|\|T^{\sharp_A} x\|_A^2-\|S^{\sharp_A} x\|_A^2 \right|\Big)+|\langle ST^{\sharp_A} x, x\rangle_A|\\
&\quad\quad\quad\quad\quad+2|\langle Tx, x\rangle_A|\,|\langle Sx, x\rangle_A|\quad (\text{by }\,\eqref{r})\\
 &=\frac{1}{2}\Big(\langle (T T^{\sharp_A}+S S^{\sharp_A})x, x\rangle_A+\left|\langle (T T^{\sharp_A}-S S^{\sharp_A})x, x\rangle_A \right|\Big)+|\langle ST^{\sharp_A} x, x\rangle_A|\\
&\quad\quad\quad\quad\quad+2|\langle Tx, x\rangle_A|\,|\langle Sx, x\rangle_A|\\
  &\leq  \frac{1}{2}\Big(\omega_A(T T^{\sharp_A}+S S^{\sharp_A})+\omega_A(T T^{\sharp_A}-S S^{\sharp_A})\Big)+\omega_A(ST^{\sharp_A})+2\omega_A\left(T\right)\omega_A\left(S\right)\\
 &=\frac{1}{2}\Big(\|T T^{\sharp_A}+S S^{\sharp_A}\|_A+\|T T^{\sharp_A}-S S^{\sharp_A}\|_A\Big)+\omega_A(ST^{\sharp_A})+2\omega_A\left(T\right)\omega_A\left(S\right),
\end{align*}
where the last equality follows from Lemma \ref{lem03}, since the operators $TT^{\sharp_A} \pm SS^{\sharp_A}$ are $A$-selfadjoint. So, we infer that
\begin{align*}
|\langle (T&+S)x, x\rangle_A|^2\\
&\leq  \frac{1}{2}\Big(\|T T^{\sharp_A}+S S^{\sharp_A}\|_A+\|T T^{\sharp_A}-S S^{\sharp_A}\|_A\Big)+\omega_A(ST^{\sharp_A})+2\omega_A\left(T\right)\omega_A\left(S\right).
\end{align*}
 Therefore, the desired result follows by taking supremum over all $x\in \mathcal{H}$ with $\|x\|_A=1$ in the last inequality.
\end{proof}

Our next objective is to refine the triangle inequality related to $\omega_A(\cdot)$. To do this, we need to recall  the following lemma from \cite{fekimjom}.

\begin{lemma}\label{main2jd}
Let $T_1,T_2,S_1,S_2\in \mathcal{B}_{A^{1/2}}(\mathcal{H})$. Then,
\begin{align}\label{eq1}
r_A\left(T_1S_1+T_2S_2\right)
& \leq \left\| \begin{pmatrix}
\|S_1T_1\|_A &\sqrt{\left\Vert S_1T_2\right\Vert_A \left\Vert S_2T_1\right\Vert_A}\\
\sqrt{\left\Vert S_1T_2\right\Vert_A \left\Vert S_2T_1\right\Vert_A} &\|S_2T_2\|_A
\end{pmatrix}\right\|.
\end{align}%
\end{lemma}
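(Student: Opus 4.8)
The plan is to lift the problem to $2\times 2$ operator matrices over the positive operator $\mathbb{A}=\begin{pmatrix}A&0\\0&A\end{pmatrix}$ and to exploit the commutativity property \eqref{commut} of the $A$-spectral radius. First I would introduce the block operators
$$
P=\begin{pmatrix} T_1 & T_2\\ 0 & 0\end{pmatrix},\qquad
Q=\begin{pmatrix} S_1 & 0\\ S_2 & 0\end{pmatrix},
$$
both of which lie in $\mathcal{B}_{\mathbb{A}^{1/2}}(\mathcal{H}\oplus\mathcal{H})$ because their blocks belong to $\mathcal{B}_{A^{1/2}}(\mathcal{H})$. A direct multiplication gives $PQ=\begin{pmatrix} T_1S_1+T_2S_2 & 0\\ 0 & 0\end{pmatrix}$ and $QP=\begin{pmatrix} S_1T_1 & S_1T_2\\ S_2T_1 & S_2T_2\end{pmatrix}$. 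Since $\left(\begin{pmatrix} X & 0\\ 0 & 0\end{pmatrix}\right)^n=\begin{pmatrix} X^n & 0\\ 0 & 0\end{pmatrix}$ and, by Lemma \ref{lem02}(b), $\left\|\begin{pmatrix} X^n & 0\\ 0 & 0\end{pmatrix}\right\|_{\mathbb{A}}=\|X^n\|_A$, the defining formula \eqref{newrad} yields $r_{\mathbb{A}}(PQ)=r_A(T_1S_1+T_2S_2)$. Applying \eqref{commut} to $P$ and $Q$ then reduces the statement to estimating $r_{\mathbb{A}}(QP)$.

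The heart of the argument is a purely ``matrix-of-norms'' majorization for the $\mathbb{A}$-spectral radius. Let $N$ be the $2\times 2$ nonnegative scalar matrix whose entries are the blockwise $A$-seminorms of $QP$, namely
$$
N=\begin{pmatrix} \|S_1T_1\|_A & \|S_1T_2\|_A\\ \|S_2T_1\|_A & \|S_2T_2\|_A\end{pmatrix}.
$$
I claim $r_{\mathbb{A}}(QP)\le r(N)$, the usual spectral radius of $N$. For an arbitrary block operator $M=(X_{ij})$ and any $x=(x_1,x_2)$, the triangle inequality together with $\|Xx\|_A\le\|X\|_A\|x\|_A$ gives $\|Mx\|_{\mathbb{A}}\le\|N_M u\|$, where $u=(\|x_1\|_A,\|x_2\|_A)$ is a Euclidean vector and $N_M$ is the matrix of blockwise $A$-norms; hence $\|M\|_{\mathbb{A}}\le\|N_M\|$. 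Applying this to $M=(QP)^n$ and using the submultiplicativity $\|XY\|_A\le\|X\|_A\|Y\|_A$ on each product appearing in the block entries shows that the matrix of $A$-norms of $(QP)^n$ is dominated entrywise by $N^n$; since entrywise domination of nonnegative matrices is preserved by the $\ell^2$-operator norm, $\|(QP)^n\|_{\mathbb{A}}\le\|N^n\|$. Taking $n$-th roots and letting $n\to\infty$ in \eqref{newrad} gives $r_{\mathbb{A}}(QP)\le\lim_n\|N^n\|^{1/n}=r(N)$.

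Finally I would compute $r(N)$ explicitly. Writing $a=\|S_1T_1\|_A$, $b=\|S_1T_2\|_A$, $c=\|S_2T_1\|_A$ and $d=\|S_2T_2\|_A$, the Perron root of $N$ equals $\tfrac{1}{2}\big((a+d)+\sqrt{(a-d)^2+4bc}\big)$. Because $4bc=(2\sqrt{bc})^2$, this is precisely the largest eigenvalue — equivalently the operator norm — of the symmetric nonnegative matrix $\begin{pmatrix} a & \sqrt{bc}\\ \sqrt{bc} & d\end{pmatrix}$. Chaining the three steps delivers the asserted inequality \eqref{eq1}. The step I expect to be the main obstacle is the majorization itself: establishing the blockwise estimate $\|M\|_{\mathbb{A}}\le\|N_M\|$, justifying monotonicity of the $\ell^2$-operator norm under entrywise domination, and correctly propagating these bounds through the powers so that the geometric growth rate $r(N)$ emerges in the limit. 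By contrast, the reduction to block operators and the concluding $2\times 2$ eigenvalue computation are routine.
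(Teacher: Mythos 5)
Your proposal is correct, but there is nothing in this paper to compare it against: Lemma \ref{main2jd} is not proved here at all, it is simply recalled from the reference \cite{fekimjom}. Measured on its own merits, your argument is a complete, self-contained proof that uses only tools already stated in the paper, and every step checks out. The lift $P=\begin{pmatrix} T_1 & T_2\\ 0 & 0\end{pmatrix}$, $Q=\begin{pmatrix} S_1 & 0\\ S_2 & 0\end{pmatrix}$ does lie in $\mathcal{B}_{\mathbb{A}^{1/2}}(\mathcal{H}\oplus\mathcal{H})$ (a two-line verification from the definition of $\mathbb{A}$-boundedness, worth writing out); the identification $r_{\mathbb{A}}(PQ)=r_A(T_1S_1+T_2S_2)$ follows from \eqref{newrad} and Lemma \ref{lem02}(b) applied to the block-diagonal powers; the commutation $r_{\mathbb{A}}(PQ)=r_{\mathbb{A}}(QP)$ is legitimate since \eqref{commut} holds for any positive operator, in particular for $\mathbb{A}$; the majorization $\|M\|_{\mathbb{A}}\le\|N_M\|$, the entrywise induction $N_{(QP)^n}\le N^n$, and the monotonicity of the $\ell^2$-norm under entrywise domination of nonnegative matrices are all standard and correctly deployed, yielding $r_{\mathbb{A}}(QP)\le r(N)$; and the closing computation identifying the Perron root of $N$ with the norm of the symmetrized matrix $\begin{pmatrix} a & \sqrt{bc}\\ \sqrt{bc} & d\end{pmatrix}$ is right, since the two matrices share trace and determinant and the symmetric one has its norm equal to its largest eigenvalue. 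This is, in substance, the same route taken in the cited source \cite{fekimjom}, which adapts Kittaneh's classical matrix-of-norms argument (the case $A=I$) to the semi-Hilbertian setting; so your proof reconstructs the external proof rather than offering a genuinely different one, and in doing so makes the present paper's use of the lemma self-contained.
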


Now, we are in a position to prove the following theorem which covers and generalizes a recent result proved by Abu-Omar and Kittaneh in \cite{A.K.2}.

\begin{theorem}\label{001}
Let $T,S\in \mathcal{B}_{A}(\mathcal{H})$. Then,
\begin{align}\label{eq221}
 &\omega_A\left(T+S\right)\nonumber\\
& \leq \frac{1}{2}\left[ \omega_A\left(T\right)+\omega_A\left(S\right) +%
\sqrt{\left( \omega_A\left(T\right)-\omega_A\left(S\right) \right)
^{2}+4\sup_{\theta\in \mathbb{R}}\left\Vert \Re_A(e^{i\theta} T)\Re_A(e^{i\theta} S)\right\Vert_A}\right]\nonumber\\
 &\leq\omega_A\left(T\right)+\omega_A\left(S\right).
\end{align}%
\end{theorem}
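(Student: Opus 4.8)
The plan is to use the characterization \eqref{zm} to collapse the problem to estimating the $A$-seminorm of a single $A$-selfadjoint operator, and then to feed the resulting product structure into Lemma \ref{main2jd}. First I would fix $\theta\in\mathbb{R}$ and abbreviate $B_\theta:=\Re_A(e^{i\theta}T)$ and $C_\theta:=\Re_A(e^{i\theta}S)$. Since $T,S\in\mathcal{B}_A(\mathcal{H})$, both $B_\theta$ and $C_\theta$ lie in $\mathcal{B}_A(\mathcal{H})\subseteq\mathcal{B}_{A^{1/2}}(\mathcal{H})$ and are $A$-selfadjoint, hence so is their sum $B_\theta+C_\theta=\Re_A(e^{i\theta}(T+S))$. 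By Lemma \ref{lem03} its $A$-seminorm equals its $A$-spectral radius, so that $\|\Re_A(e^{i\theta}(T+S))\|_A=r_A(B_\theta+C_\theta)$.

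The key step is to write $B_\theta+C_\theta=I\cdot B_\theta+C_\theta\cdot I$ and apply Lemma \ref{main2jd} with $T_1=I$, $S_1=B_\theta$, $T_2=C_\theta$, $S_2=I$. This choice is engineered so that the diagonal entries of the resulting scalar matrix are $\|B_\theta\|_A$ and $\|C_\theta\|_A$, while the off-diagonal entry becomes $\sqrt{\|B_\theta C_\theta\|_A\,\|I\|_A}=\sqrt{\|B_\theta C_\theta\|_A}$, i.e.\ precisely the square root of $\|\Re_A(e^{i\theta}T)\Re_A(e^{i\theta}S)\|_A$. Evaluating the spectral norm of the real symmetric matrix $\begin{pmatrix}a&b\\b&c\end{pmatrix}$ as its larger eigenvalue $\tfrac12\big(a+c+\sqrt{(a-c)^2+4b^2}\big)$ (which dominates in modulus since $a+c\ge0$), with $a=\|B_\theta\|_A$, $c=\|C_\theta\|_A$ and $b^2=\|B_\theta C_\theta\|_A$, yields
$$\|\Re_A(e^{i\theta}(T+S))\|_A\le\tfrac12\Big(\|B_\theta\|_A+\|C_\theta\|_A+\sqrt{(\|B_\theta\|_A-\|C_\theta\|_A)^2+4\|B_\theta C_\theta\|_A}\Big).$$

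To finish, I would take the supremum over $\theta$. The point requiring care is that the right-hand side must be monotonically increasing in each of $\|B_\theta\|_A$, $\|C_\theta\|_A$ and $\|B_\theta C_\theta\|_A$: writing $g(a,c,p)=\tfrac12\big(a+c+\sqrt{(a-c)^2+4p}\big)$, one checks $\partial_a g=\tfrac12\big(1+(a-c)/\sqrt{(a-c)^2+4p}\big)\ge0$ and symmetrically for $c$, while monotonicity in $p$ is clear. Since $\|B_\theta\|_A\le\omega_A(T)$, $\|C_\theta\|_A\le\omega_A(S)$ by \eqref{zm}, and $\|B_\theta C_\theta\|_A\le M:=\sup_{\theta}\|\Re_A(e^{i\theta}T)\Re_A(e^{i\theta}S)\|_A$, monotonicity lets me replace each argument by its bound for every $\theta$, so that $\|\Re_A(e^{i\theta}(T+S))\|_A\le g(\omega_A(T),\omega_A(S),M)$; taking $\sup_\theta$ on the left via \eqref{zm} gives the first inequality. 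The second inequality reduces to $M\le\omega_A(T)\omega_A(S)$, which follows from the submultiplicativity $\|B_\theta C_\theta\|_A\le\|B_\theta\|_A\|C_\theta\|_A\le\omega_A(T)\omega_A(S)$ together with the elementary identity $\sqrt{(u-v)^2+4uv}=u+v$ for $u,v\ge0$. The main obstacle is purely the bookkeeping in these last steps: verifying the monotonicity of $g$ so the supremum can be passed through, and confirming that the specific decomposition fed into Lemma \ref{main2jd} produces exactly the product $\Re_A(e^{i\theta}T)\Re_A(e^{i\theta}S)$ rather than its reverse.
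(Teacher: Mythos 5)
Your proposal is correct and follows essentially the same route as the paper's own proof: the same reduction via \eqref{zm} and Lemma \ref{lem03}, the same application of Lemma \ref{main2jd} with $T_1=S_2=I$, $S_1=\Re_A(e^{i\theta}T)$, $T_2=\Re_A(e^{i\theta}S)$, and the same concluding submultiplicativity argument for the second inequality. The only cosmetic difference is that you compute the $2\times 2$ matrix norm explicitly and then pass the supremum through via monotonicity of the scalar function $g(a,c,p)$, whereas the paper first enlarges the matrix entries (using norm monotonicity for entrywise-nonnegative matrices) and then computes the norm.
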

\begin{proof}
Let $\theta \in \mathbb{R}$. It can be seen that $\Re_A[e^{i\theta}(T+S)]$ is an $A$-selfadjoint operator. So, by Lemma \ref{lem03} we get,
$$\left\|\Re_A[e^{i\theta}(T+S)]\right\|_A=r_A\Big(\Re_A[e^{i\theta}(T+S)]\Big).$$
By letting $T_1=I$, $S_1=\Re_A(e^{i\theta}T)$, $T_2=\Re_A(e^{i\theta}S)$ and $S_2=I$ in Lemma \ref{main2jd} and then using the norm monotonicity of matrices with nonnegative entries we get,
\begin{align*}
&\left\|\Re_A[e^{i\theta}(T+S)]\right\|_A\\
 &=r_A\Big(\Re_A(e^{i\theta}T)+\Re_A(e^{i\theta}S)\Big)\\
&\leq \left\| \begin{pmatrix}
\|\Re_A(e^{i\theta}T)\|_A &\left\|\Re_A(e^{i\theta}T)\Re_A(e^{i\theta}S)\right\|_A^{1/2}\\
\left\|\Re_A(e^{i\theta}T)\Re_A(e^{i\theta}S)\right\|_A^{1/2} &\|\Re_A(e^{i\theta}S)\|_A
\end{pmatrix}\right\|\\
&\leq \left\| \begin{pmatrix}
\omega_A(T) &\sqrt{\displaystyle\sup_{\theta\in \mathbb{R}}\left\|\Re_A(e^{i\theta}T)\Re_A(e^{i\theta}S)\right\|_A}\\
\sqrt{\displaystyle\sup_{\theta\in \mathbb{R}}\left\|\Re_A(e^{i\theta}T)\Re_A(e^{i\theta}S)\right\|_A} &\omega_A(S)
\end{pmatrix}\right\|\\
&=\frac{1}{2}\left[ \omega_A\left(T\right)+\omega_A\left(S\right) +%
\sqrt{\left( \omega_A\left(T\right)-\omega_A\left(S\right) \right)
^{2}+4\sup_{\theta\in \mathbb{R}}\left\Vert \Re_A(e^{i\theta} T)\Re_A(e^{i\theta} S)\right\Vert_A}\right].
\end{align*}
By taking supremum over all $\theta\in \mathbb{R}$ we get,
\begin{align}\label{refinewome}
 &\omega_A\left(T+S\right)\nonumber\\
& \leq \frac{1}{2}\left[ \omega_A\left(T\right)+\omega_A\left(S\right) +%
\sqrt{
\left( \omega_A\left(T\right)-\omega_A\left(S\right) \right)^{2}+4\sup_{\theta\in \mathbb{R}}\left\Vert \Re_A(e^{i\theta} T)\Re_A(e^{i\theta} S)\right\Vert_A}\right].
\end{align}%
This proves the first inequality. Moreover,
\begin{align*}
&\sqrt{\left( \omega_A\left(T\right)-\omega_A\left(S\right) \right)^{2}+4\sup_{\theta\in \mathbb{R}}\left\Vert \Re_A(e^{i\theta} T)\Re_A(e^{i\theta} S)\right\Vert_A}\\
&\leq \sqrt{\left( \omega_A\left(T\right)-\omega_A\left(S\right) \right)^{2}+4\omega_A\left(T\right)\omega_A\left(S\right)}\\
 &=\sqrt{\left( \omega_A\left(T\right)+\omega_A\left(S\right) \right)^{2}}=\omega_A\left(T\right)+\omega_A\left(S\right).
\end{align*}
So, by using \eqref{refinewome} we easily get the second inequality.
\end{proof}

The following lemma (see in \cite{saddi}) plays a crucial role in proving our next result.

\begin{lemma} \label{Lemma:3}
	Let $x, y, e \in \mathcal{H}$ with $\|e\|_A = 1.$ Then,
	\begin{eqnarray*}
		|\langle x, e \rangle_A \langle e, y \rangle_A| \leq \frac{1}{2}\big( |\langle x, y \rangle_A| + \|x\|_A\|y\|_A\big).
	\end{eqnarray*}
\end{lemma}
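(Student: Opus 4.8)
The final statement is Lemma~\ref{Lemma:3}, the $A$-analogue of a classical inequality. Let me sketch a proof.

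The plan is to reduce the semi-Hilbertian inequality to its classical counterpart by passing through the map $x \mapsto A^{1/2}x$, exactly as was done in the proof of Lemma~\ref{leme1}. Recall that $\langle x, y\rangle_A = \langle A^{1/2}x, A^{1/2}y\rangle$ and $\|x\|_A = \|A^{1/2}x\|$. So the first step is to rewrite every $A$-quantity in the statement in terms of the ordinary inner product on $\mathcal{H}$: setting $u = A^{1/2}x$, $v = A^{1/2}y$ and $f = A^{1/2}e$, the hypothesis $\|e\|_A = 1$ becomes $\|f\| = 1$, and the claimed inequality becomes
\[
|\langle u, f\rangle \langle f, v\rangle| \leq \tfrac{1}{2}\big(|\langle u, v\rangle| + \|u\|\,\|v\|\big).
\]
Thus it suffices to establish the classical version of the inequality for arbitrary $u, v, f \in \mathcal{H}$ with $\|f\| = 1$.

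For the classical inequality, the cleanest route is to expand in the orthonormal direction $f$. Write $u = \langle u, f\rangle f + u'$ and $v = \langle v, f\rangle f + v'$ where $u', v' \perp f$. Then $\langle u, f\rangle\langle f, v\rangle = \langle u, f\rangle \overline{\langle v, f\rangle}$, and $\langle u, v\rangle = \langle u, f\rangle\overline{\langle v, f\rangle} + \langle u', v'\rangle$. Denoting $\alpha = \langle u, f\rangle$, $\beta = \langle v, f\rangle$, the quantity we want to bound is $|\alpha\beta|$, and I would use $|\langle u', v'\rangle| \leq \|u'\|\,\|v'\|$ together with $\|u\|^2 = |\alpha|^2 + \|u'\|^2$ and $\|v\|^2 = |\beta|^2 + \|v'\|^2$. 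Writing $p = \|u'\|$, $q = \|v'\|$, the target inequality reduces to $|\alpha\beta| \leq \tfrac12\big(|\alpha\beta| - |\langle u',v'\rangle|\cdot(\text{sign terms}) + \sqrt{(|\alpha|^2+p^2)(|\beta|^2+q^2)}\big)$; more directly, it suffices to show $|\alpha\beta| \leq \sqrt{(|\alpha|^2+p^2)(|\beta|^2+q^2)}$, which is immediate, and to handle the cross term $\langle u,v\rangle$ carefully. The most transparent argument is to apply the AM--GM or Cauchy--Schwarz estimate $|\alpha\beta| = \tfrac12\big(|\alpha\beta| + |\alpha|\,|\beta|\big)$ and bound $|\alpha|\,|\beta| \le \|u\|\,\|v\|$ while relating $|\alpha\beta|$ to $|\langle u,v\rangle|$ via the decomposition; alternatively one can cite that this is precisely the classical result whose proof is standard.

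The main obstacle is organizing the cross-term bookkeeping so that $|\langle u,v\rangle|$ rather than $\|u\|\|v\|$ appears in one of the two halves of the right-hand side; the naive Cauchy--Schwarz bound $|\alpha\beta|\le\|u\|\|v\|$ alone is weaker than the claim and would not yield the averaged form with $|\langle u,v\rangle|$. I expect the cleanest resolution is to prove the classical inequality directly via the orthogonal decomposition above, noting that $|\langle u,v\rangle| \ge |\alpha\beta| - \|u'\|\|v'\|$ and combining with $\|u\|\|v\| \ge (|\alpha|^2+p^2)^{1/2}(|\beta|^2+q^2)^{1/2} \ge |\alpha\beta| + pq$; adding these two lower bounds and halving recovers exactly $|\alpha\beta|$ on the left. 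Once the classical statement is in hand, the reduction back to the $A$-setting via $A^{1/2}$ is purely mechanical and completes the proof.
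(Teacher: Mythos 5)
Your proof is correct, but note how it relates to the paper: the paper does not prove Lemma~\ref{Lemma:3} at all --- it simply quotes it from \cite{saddi}, just as it later quotes the classical ($A=I$) case, Buzano's inequality \cite{buzano}, inside the proof of Theorem~\ref{theo1}. What you have supplied is a self-contained argument in two steps. First, the reduction to the classical case via $u=A^{1/2}x$, $v=A^{1/2}y$, $f=A^{1/2}e$ is legitimate, since $\langle x,y\rangle_A=\langle A^{1/2}x,A^{1/2}y\rangle$ and $\|x\|_A=\|A^{1/2}x\|$, so $\|e\|_A=1$ becomes $\|f\|=1$; this is exactly the device the paper itself uses to deduce Lemma~\ref{leme1} from Dragomir's classical result, so your proof is stylistically consistent with the paper. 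Second, your orthogonal-decomposition proof of the classical Buzano inequality is sound: writing $u=\alpha f+u'$, $v=\beta f+v'$ with $u',v'\perp f$, $p=\|u'\|$, $q=\|v'\|$, you get $|\langle u,v\rangle|=|\alpha\overline{\beta}+\langle u',v'\rangle|\ge |\alpha|\,|\beta|-pq$ by the triangle inequality and Cauchy--Schwarz, and $\|u\|\,\|v\|=\sqrt{(|\alpha|^2+p^2)(|\beta|^2+q^2)}\ge |\alpha|\,|\beta|+pq$ since the difference of squares equals $\left(|\alpha|q-|\beta|p\right)^2\ge 0$; adding and halving yields $|\alpha|\,|\beta|\le\tfrac12\big(|\langle u,v\rangle|+\|u\|\,\|v\|\big)$, which is the claim. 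One caveat: the middle paragraph of your write-up (the ``sign terms'' bookkeeping and the odd identity $|\alpha\beta|=\tfrac12\big(|\alpha\beta|+|\alpha|\,|\beta|\big)$) is muddled and should be deleted; only your final paragraph constitutes the actual proof, and it stands on its own. The net trade-off: your route makes the lemma self-contained where the paper leans on the literature, at the cost of a few extra lines.
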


Now, we prove the following theorem.

\begin{theorem}\label{mai9}
Let $T,S\in \mathcal{B}_A(\mathcal{H})$. Then
\begin{align*}\label{derive1}
\omega_A(T+S)
&\leq \sqrt{\omega_A^2(T)+\omega_A^2(S)+\frac{1}{2}\left\|T^{\sharp_A}T+SS^{\sharp_A}\right\|_A+\omega_A(ST)}.
\end{align*}
\end{theorem}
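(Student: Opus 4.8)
The plan is to estimate $|\langle (T+S)x,x\rangle_A|^2$ pointwise for a fixed vector $x$ with $\|x\|_A=1$ and then pass to the supremum. First I would split $\langle (T+S)x,x\rangle_A=\langle Tx,x\rangle_A+\langle Sx,x\rangle_A$ and apply the elementary estimate $|a+b|^2\le |a|^2+|b|^2+2|a|\,|b|$. The two square terms are then immediately controlled by $\omega_A^2(T)$ and $\omega_A^2(S)$ (since $\|x\|_A=1$), which account for the first two summands on the right-hand side.

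The heart of the argument is the cross term $2|\langle Tx,x\rangle_A|\,|\langle Sx,x\rangle_A|$. Here I would invoke Saddi's Lemma \ref{Lemma:3} with the choice $u=Tx$, $e=x$, and---crucially---$v=S^{\sharp_A}x$. The point of taking $S^{\sharp_A}x$ rather than $Sx$ is that the defining identity of the $A$-adjoint gives $\langle x,S^{\sharp_A}x\rangle_A=\langle Sx,x\rangle_A$, so that the left-hand side of Saddi's inequality is exactly (half of) the cross term, while simultaneously $\langle Tx,S^{\sharp_A}x\rangle_A=\langle STx,x\rangle_A$. Consequently I obtain $2|\langle Tx,x\rangle_A|\,|\langle Sx,x\rangle_A|\le |\langle STx,x\rangle_A|+\|Tx\|_A\|S^{\sharp_A}x\|_A$, and the first piece is bounded by $\omega_A(ST)$ after passing to the supremum.

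For the remaining piece $\|Tx\|_A\|S^{\sharp_A}x\|_A$ I would use the arithmetic--geometric mean inequality $\|Tx\|_A\|S^{\sharp_A}x\|_A\le\tfrac12\big(\|Tx\|_A^2+\|S^{\sharp_A}x\|_A^2\big)$ and then rewrite the two terms via the $A$-adjoint identities as $\|Tx\|_A^2=\langle T^{\sharp_A}Tx,x\rangle_A$ and $\|S^{\sharp_A}x\|_A^2=\langle SS^{\sharp_A}x,x\rangle_A$; their sum is $\langle (T^{\sharp_A}T+SS^{\sharp_A})x,x\rangle_A$. Since $T^{\sharp_A}T+SS^{\sharp_A}$ is $A$-positive, hence $A$-selfadjoint, Lemma \ref{lem03} identifies its $A$-seminorm with its $A$-numerical radius, giving $\langle (T^{\sharp_A}T+SS^{\sharp_A})x,x\rangle_A\le\|T^{\sharp_A}T+SS^{\sharp_A}\|_A$. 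Collecting all the pieces yields $|\langle (T+S)x,x\rangle_A|^2\le\omega_A^2(T)+\omega_A^2(S)+\tfrac12\|T^{\sharp_A}T+SS^{\sharp_A}\|_A+\omega_A(ST)$; taking the supremum over $\|x\|_A=1$ and then the square root is precisely the claimed inequality.

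The main obstacle---really the only genuinely delicate step---is recognizing the correct substitution in Saddi's lemma. The naive choice $v=Sx$ would instead produce the terms $\omega_A(S^{\sharp_A}T)$ and $\tfrac12\|T^{\sharp_A}T+S^{\sharp_A}S\|_A$, which do not match the target; it is the asymmetric pairing $v=S^{\sharp_A}x$ that forces $ST$ and $SS^{\sharp_A}$ to appear. Beyond this, the argument is routine provided one keeps the $A$-adjoint identity $\langle Mu,v\rangle_A=\langle u,M^{\sharp_A}v\rangle_A$ straight throughout.
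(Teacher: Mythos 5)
Your proposal is correct and follows essentially the same route as the paper's own proof: the same splitting of $|\langle (T+S)x,x\rangle_A|^2$, the same application of Saddi's Lemma~\ref{Lemma:3} with the asymmetric choice $e=x$, $v=S^{\sharp_A}x$ (so that the cross term produces $\omega_A(ST)$ and, after the arithmetic--geometric mean inequality, $\tfrac12\|T^{\sharp_A}T+SS^{\sharp_A}\|_A$ via Lemma~\ref{lem03}), followed by taking the supremum over $\|x\|_A=1$. The only difference is cosmetic: the paper rewrites $\|Tx\|_A\|S^{\sharp_A}x\|_A$ as $\sqrt{\langle T^{\sharp_A}Tx,x\rangle_A\langle SS^{\sharp_A}x,x\rangle_A}$ before applying AM--GM, whereas you apply AM--GM first and then pass to the $A$-adjoint identities.
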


\begin{proof}
Let $x\in \mathcal{H}$ be such that $\|x\|_A=1$. One can verify that
\begin{align*}
|\langle (T+S)x, x\rangle_A|^2
&\leq|\langle Tx, x\rangle_A|^2+|\langle Sx, x\rangle_A|^2+2|\langle Tx, x\rangle_A|\;|\langle Sx, x\rangle_A|\\
&=|\langle Tx, x\rangle_A|^2+|\langle Sx, x\rangle_A|^2+2|\langle Tx, x\rangle_A|\;|\langle x, S^{\sharp_A}x\rangle_A|.
\end{align*}
Using Lemma \ref{Lemma:3} we get,
\begin{align*}
&|\langle (T+S)x, x\rangle_A|^2\\
&\leq|\langle Tx, x\rangle_A|^2+|\langle Sx, x\rangle_A|^2+\|Tx\|_A\|S^{\sharp_A}x\|_A+|\langle Tx, S^{\sharp_A}x\rangle_A|\\
&=|\langle Tx, x\rangle_A|^2+|\langle Sx, x\rangle_A|^2+\sqrt{\langle T^{\sharp_A}Tx, x\rangle_A\langle SS^{\sharp_A}x, x\rangle_A}+|\langle STx, x\rangle_A|.
\end{align*}
By using the arithmetic-geometric mean inequality we get,
\begin{align*}
|\langle (T+S)x, x\rangle_A|^2
&\leq\omega_A^2(T)+\omega_A^2(S)+\frac{1}{2}\left(\langle T^{\sharp_A}Tx, x\rangle_A+\langle SS^{\sharp_A}x, x\rangle_A\right)+\omega_A(ST)\\
&=\omega_A^2(T)+\omega_A^2(S)+\frac{1}{2}\langle (T^{\sharp_A}T+ SS^{\sharp_A})x, x\rangle_A+\omega_A(ST)\\
&\leq\omega_A^2(T)+\omega_A^2(S)+\frac{1}{2}\omega_A \left(T^{\sharp_A}T+ SS^{\sharp_A}\right)+\omega_A(ST)\\
&=\omega_A^2(T)+\omega_A^2(S)+\frac{1}{2}\left\|T^{\sharp_A}T+ SS^{\sharp_A}\right\|_A+\omega_A(ST),
\end{align*}
where the last equality follows from Lemma \ref{lem03}. So, we infer that
\begin{align*}
|\langle (T+S)x, x\rangle_A|^2\leq \omega_A^2(T)+\omega_A^2(S)+\frac{1}{2}\left\|T^{\sharp_A}T+ SS^{\sharp_A}\right\|_A+\omega_A(ST),
\end{align*}
for all $x\in \mathcal{H}$ with $\|x\|_A=1$. Thus, by taking the supremum over all $x\in \mathcal{H}$ with $\|x\|_A=1$, we get
\begin{align*}
\omega_A^2(T+S)\leq \omega_A^2(T)+\omega_A^2(S)+\frac{1}{2}\left\|T^{\sharp_A}T+ SS^{\sharp_A}\right\|_A+\omega_A(ST).
\end{align*}
This proves the desired result.
\end{proof}

As an application of the above theorem, we get the following corollary. 

\begin{corollary}\label{corr2020}
Let $T\in\mathcal{B}_{A}(\mathcal{H})$. Then
\begin{align*}
\omega_A(T) \leq \frac{1}{2}\sqrt{{\big\|TT^{\sharp_A} + T^{\sharp_A} T\big\|}_A + 2\omega_A(T^2)}\le \frac{\sqrt{2}}{2}\sqrt{\|T^{\sharp_A} T+TT^{\sharp_A}\|_A}.
\end{align*}
\end{corollary}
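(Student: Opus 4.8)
The plan is to apply the immediately preceding theorem (Theorem~\ref{mai9}) to the pair $S=T$ and then simplify. With this choice the left-hand side becomes $\omega_A(T+T)=2\omega_A(T)$ by the homogeneity of $\omega_A(\cdot)$, while on the right $\omega_A^2(S)=\omega_A^2(T)$, $SS^{\sharp_A}=TT^{\sharp_A}$ and $ST=T^2$. Substituting and squaring the resulting inequality gives
\[
4\omega_A^2(T)\le 2\omega_A^2(T)+\tfrac12\|T^{\sharp_A}T+TT^{\sharp_A}\|_A+\omega_A(T^2),
\]
so that $2\omega_A^2(T)\le \tfrac12\|T^{\sharp_A}T+TT^{\sharp_A}\|_A+\omega_A(T^2)$. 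Taking square roots and pulling the factor $\tfrac14$ out from under the radical yields exactly the first claimed inequality, after noting that $\|T^{\sharp_A}T+TT^{\sharp_A}\|_A=\|TT^{\sharp_A}+T^{\sharp_A}T\|_A$.

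For the second inequality it suffices to establish the auxiliary estimate $\omega_A(T^2)\le \tfrac12\|TT^{\sharp_A}+T^{\sharp_A}T\|_A$. Granting this, one has $\|TT^{\sharp_A}+T^{\sharp_A}T\|_A+2\omega_A(T^2)\le 2\|TT^{\sharp_A}+T^{\sharp_A}T\|_A$, and applying $\tfrac12\sqrt{\,\cdot\,}$ to both sides converts the bound from the first part into $\tfrac{\sqrt2}{2}\sqrt{\|T^{\sharp_A}T+TT^{\sharp_A}\|_A}$, as required.

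To prove the auxiliary estimate, I would fix $x\in\mathcal H$ with $\|x\|_A=1$. Using the defining property of the $A$-adjoint one writes $\langle T^2x,x\rangle_A=\langle Tx,T^{\sharp_A}x\rangle_A$, applies the Cauchy--Schwarz inequality for $\langle\cdot,\cdot\rangle_A$ and then the arithmetic--geometric mean inequality to obtain $|\langle T^2x,x\rangle_A|\le \tfrac12\big(\|Tx\|_A^2+\|T^{\sharp_A}x\|_A^2\big)$. Since $\|Tx\|_A^2=\langle T^{\sharp_A}Tx,x\rangle_A$ and $\|T^{\sharp_A}x\|_A^2=\langle TT^{\sharp_A}x,x\rangle_A$, the right-hand side equals $\tfrac12\langle (T^{\sharp_A}T+TT^{\sharp_A})x,x\rangle_A$, which is bounded by $\tfrac12\,\omega_A(T^{\sharp_A}T+TT^{\sharp_A})=\tfrac12\|T^{\sharp_A}T+TT^{\sharp_A}\|_A$ because $T^{\sharp_A}T+TT^{\sharp_A}$ is $A$-selfadjoint, so Lemma~\ref{lem03} applies. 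Taking the supremum over all such $x$ gives the auxiliary bound. The only genuinely delicate point is this estimate; once it is in hand, both displayed inequalities reduce to the elementary algebraic manipulations described above.
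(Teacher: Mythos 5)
Your proof is correct, and its first half coincides with the paper's: both obtain the first inequality by setting $S=T$ in Theorem~\ref{mai9}, using $\omega_A(2T)=2\omega_A(T)$, squaring, and rearranging. Where you diverge is the second inequality. The paper gets it by chaining two results it cites as known: the power inequality $\omega_A(T^2)\le\omega_A^2(T)$ (from the reference \cite{feki01}) together with $\omega_A^2(T)\le\tfrac12\big\|TT^{\sharp_A}+T^{\sharp_A}T\big\|_A$, which combine to give exactly your auxiliary estimate $\omega_A(T^2)\le\tfrac12\big\|TT^{\sharp_A}+T^{\sharp_A}T\big\|_A$. You instead prove that estimate directly: writing $\langle T^2x,x\rangle_A=\langle Tx,T^{\sharp_A}x\rangle_A$, applying Cauchy--Schwarz for the semi-inner product and the arithmetic--geometric mean inequality, identifying $\|Tx\|_A^2+\|T^{\sharp_A}x\|_A^2=\langle (T^{\sharp_A}T+TT^{\sharp_A})x,x\rangle_A$, and invoking Lemma~\ref{lem03} for the $A$-selfadjoint (indeed $A$-positive) operator $T^{\sharp_A}T+TT^{\sharp_A}$. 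This buys self-containedness: you avoid the power inequality $\omega_A(T^2)\le\omega_A^2(T)$, which is a nontrivial external fact, and your argument uses only tools already developed in the paper (the same Buzano/Cauchy--Schwarz-style manipulations appearing in Theorems~\ref{mai9} and~\ref{th-norm}). The trade-off is that the paper's route, by passing through $\omega_A^2(T)$, makes transparent the intermediate refinement $\tfrac14\|TT^{\sharp_A}+T^{\sharp_A}T\|_A+\tfrac12\omega_A(T^2)\le\tfrac14\|TT^{\sharp_A}+T^{\sharp_A}T\|_A+\tfrac12\omega_A^2(T)\le\tfrac12\|TT^{\sharp_A}+T^{\sharp_A}T\|_A$, whereas yours reaches the same endpoint in one step. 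Both arguments are sound.
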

\begin{proof}
Clearly, the first inequality follows by taking $S=T$ in Theorem \ref{mai9}. Moreover, it is well-known that $\omega_A(T^2)\leq \omega_A^2(T)$ (see \cite{feki01}) and $\omega_A^2(T) \leq \frac{1}{2}{\big\|TT^{\sharp_A} + T^{\sharp_A} T\big\|}_A$. So, we get that
\begin{align*}
\frac{1}{4}{\big\|TT^{\sharp_A} + T^{\sharp_A} T\big\|}_A + \frac{1}{2}\omega_A(T^2)
&\leq \frac{1}{4}{\big\|TT^{\sharp_A} + T^{\sharp_A} T\big\|}_A + \frac{1}{2}\omega_A^2(T)\\
&\leq \frac{1}{4}{\big\|TT^{\sharp_A} + T^{\sharp_A} T\big\|}_A + \frac{1}{4}{\big\|TT^{\sharp_A} + T^{\sharp_A} T\big\|}_A\\ 
&=\frac{1}{2}{\big\|TT^{\sharp_A} + T^{\sharp_A} T\big\|}_A.
\end{align*}
This proves that the second inequality in Corollary \ref{corr2020}.
\end{proof}

\begin{remark}
Note that Corollary \ref{corr2020} has been recently proved in \cite{zamani1}.
\end{remark}

Our next improvement reads as:

\begin{theorem}\label{th-norm}
Let $T,S\in \mathcal{B}_A(\mathcal{H})$ be $A$-selfadjoint. Then,
\[\omega_A(T+S)\leq \sqrt{\omega_A^2(T+{\rm i}S)+\omega_A(ST)+\|T\|_A\|S\|_A} \leq \omega_A(T)+\omega_A(S).\]
\end{theorem}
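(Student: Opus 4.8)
The plan is to exploit that $T$, $S$, and hence $T+S$ are $A$-selfadjoint, so that Lemma \ref{lem03} gives $\omega_A(T)=\|T\|_A$ and $\omega_A(S)=\|S\|_A$, and the quadratic forms $\langle Tx,x\rangle_A$, $\langle Sx,x\rangle_A$ are real for every $x\in\mathcal{H}$. Fixing $x$ with $\|x\|_A=1$ and writing $a=\langle Tx,x\rangle_A$ and $b=\langle Sx,x\rangle_A$ (both real), the basic observation is the algebraic identity $|\langle (T+S)x,x\rangle_A|^2=(a+b)^2=(a^2+b^2)+2ab=|\langle (T+iS)x,x\rangle_A|^2+2ab$, using that $\langle(T+iS)x,x\rangle_A=a+ib$. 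Everything then reduces to controlling the cross term $2ab$.

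For the first inequality I would bound $2ab\le 2|ab|$ and estimate $|ab|=|\langle Tx,x\rangle_A\,\langle x,Sx\rangle_A|$ by the Buzano-type Lemma \ref{Lemma:3}, taking $e=x$ (allowed since $\|x\|_A=1$), the first slot equal to $Tx$ and the second slot equal to $Sx$. This yields $|ab|\le\frac{1}{2}\big(|\langle Tx,Sx\rangle_A|+\|Tx\|_A\|Sx\|_A\big)$. The $A$-selfadjointness of $S$, which gives $AS=AS^{\sharp_A}$ and hence $\langle Tx,Sx\rangle_A=\langle S^{\sharp_A}Tx,x\rangle_A=\langle STx,x\rangle_A$, lets me replace the first summand by $|\langle STx,x\rangle_A|\le\omega_A(ST)$, while $\|Tx\|_A\|Sx\|_A\le\|T\|_A\|S\|_A$ follows from $\|Tx\|_A\le\|T\|_A\|x\|_A$. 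Substituting back gives $|\langle(T+S)x,x\rangle_A|^2\le\omega_A^2(T+iS)+\omega_A(ST)+\|T\|_A\|S\|_A$, and a supremum over $\|x\|_A=1$ produces the first bound.

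For the second inequality I would square both sides and use $\omega_A(T)=\|T\|_A$, $\omega_A(S)=\|S\|_A$, so the claim collapses to $\omega_A^2(T+iS)+\omega_A(ST)\le\|T\|_A^2+\|S\|_A^2+\|T\|_A\|S\|_A$. Here $\omega_A(ST)\le\|ST\|_A\le\|T\|_A\|S\|_A$ disposes of one term, while $\omega_A^2(T+iS)\le\|T\|_A^2+\|S\|_A^2$ comes from the pointwise estimate $|\langle(T+iS)x,x\rangle_A|^2=a^2+b^2\le\omega_A^2(T)+\omega_A^2(S)$ (using $|a|\le\omega_A(T)$ and $|b|\le\omega_A(S)$) followed by a supremum.

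The step I expect to require the most care is the passage $\langle Tx,Sx\rangle_A=\langle STx,x\rangle_A$: this is exactly where $A$-selfadjointness enters, and it must be justified through $AS=AS^{\sharp_A}$ rather than a naive $S^{\sharp_A}=S$, since $A$-adjoints are only determined modulo $\mathcal{N}(A)$. Once this identification and the correct matching of slots in Lemma \ref{Lemma:3} are in place, the remaining estimates are routine consequences of the submultiplicativity of $\|\cdot\|_A$ and of the definitions of $\omega_A(\cdot)$ and $\|\cdot\|_A$.
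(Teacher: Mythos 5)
Your proposal is correct and follows essentially the same route as the paper: split $|\langle (T+S)x,x\rangle_A|^2$ into $|\langle (T+\mathrm{i}S)x,x\rangle_A|^2$ plus the cross term (using that the $A$-selfadjointness of $T$ and $S$ makes $\langle Tx,x\rangle_A$ and $\langle Sx,x\rangle_A$ real), control the cross term with the Buzano-type Lemma~\ref{Lemma:3}, and for the second bound combine $\omega_A^2(T+\mathrm{i}S)\leq \|T\|_A^2+\|S\|_A^2$, $\omega_A(ST)\leq\|T\|_A\|S\|_A$ and Lemma~\ref{lem03}. The only cosmetic difference is that the paper feeds the pair $(Tx, S^{\sharp_A}x)$ into Lemma~\ref{Lemma:3} and uses $\|S^{\sharp_A}\|_A=\|S\|_A$, whereas you use $(Tx,Sx)$ and justify $\langle Tx,Sx\rangle_A=\langle STx,x\rangle_A$ via $AS=AS^{\sharp_A}$; both are valid.
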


\begin{proof}
Let $x\in \mathcal{H}$ be such that $\|x\|_A=1.$ Then we have,
\begin{eqnarray*}
|\langle (T+S)x,x \rangle_A|^2&\leq& (|\langle Tx,x\rangle_A|+|\langle Tx,x\rangle_A|)^2\\
&=& |\langle Tx,x\rangle_A|^2+|\langle Sx,x\rangle_A|^2+2|\langle Tx,x\rangle_A||\langle Sx,x\rangle_A|\\
&=& |\langle Tx,x\rangle_A +{\rm i}\langle Sx,x\rangle_A|^2+2|\langle Tx,x\rangle_A \langle Sx,x\rangle_A|\\
&=& |\langle (T+{\rm i}S)x,x\rangle_A|^2+2|\langle Tx,x\rangle_A \langle x,S^{\sharp_A}x\rangle_A|\\
&\leq & |\langle (T+{\rm i}S)x,x\rangle_A|^2+ \|Tx\|_A\|S^{\sharp_A}x\|_A+|\langle Tx,S^{\sharp_A}x\rangle_A|\\
&&\,\,\,\,\,\,\,\,\,\,\,\,\,\,\,\,\,\,\,\,\,\,\,\,\,\,\,\,\,\,\,\,\,\,\,\,\,\,\,\,\,\,\,\,\,\,\,\,\,\,\,\,\,\,\,\,\,\,\,\,\,\,\,\,\,\,\,\,\,\,\,\,\, ~~(\mbox{by Lemma \ref{Lemma:3}})\\
&= & |\langle (T+{\rm i}S)x,x\rangle_A|^2+ \|Tx\|_A\|S^{\sharp_A}x\|_A+|\langle STx,x\rangle_A|\\
&\leq& \omega_A^2(T+{\rm i}S)+\|T\|_A\|S\|_A+\omega_A(ST).
\end{eqnarray*}
Taking supremum over all $x\in \mathcal{H}$ with $\|x\|_A=1$ we get,
\begin{eqnarray*}
\omega_A^2(T+S) &\leq& \omega_A^2(T+{\rm i}S)+\|T\|_A\|S\|_A+\omega_A(ST).
\end{eqnarray*}
Thus, we have the first inequality of the theorem. Now we prove the second inequality. It is not dificult to show that $\omega_A^2(T+{\rm i}S)\leq \|T\|_A^2+\|S\|_A^2.$ Also we have $\omega_A(ST)\leq \|T\|_A\|S\|_A.$ So, $\omega_A^2(T+{\rm i}S)+\|T\|_A\|S\|_A+\omega_A(ST)\leq (\|T\|_A+\|S\|_A)^2.$ Since $\omega_A(T)=\|T\|_A$ and $\omega_A(S)=\|S\|_A$, so we get the required second inequality of the theorem.
\end{proof}

Next we obtain the inequalities for the sum of $k$ operators. First we recall the following results:
For $T\in \mathcal{B}(\mathcal{H})$, it was shown in \cite[Proposition 3.6.]{acg3} that $T\in \mathcal{B}_{A^{1/2}}(\mathcal{H})$ if and only if there exists a unique $\widetilde{T}\in \mathcal{B}(\mathbf{R}(A^{1/2}))$ such that $Z_AT =\widetilde{T}Z_A$. Here, $Z_{A}: \mathcal{H} \rightarrow \mathbf{R}(A^{1/2})$ is	 defined by $Z_{A}x = Ax$. Also, it has been proved in \cite{feki01} that for every $T\in \mathcal{B}_{A^{1/2}}(\mathcal{H})$ we have,
\begin{equation}\label{tilde}
\|T\|_A=\|\widetilde{T}\|_{\mathcal{B}(\mathbf{R}(A^{1/2}))}\quad\text{ and }\quad \omega_A(T)=\omega(\widetilde{T}).
\end{equation}
On the basis of the above results we obtain the following theorems.

\begin{theorem}\label{theo1}
For $i=1,2,\ldots,k,$ let $S_i\in \mathcal{B}_A(\mathcal{H})$. Then,
\begin{eqnarray*}
&& \omega^{4n}_A\left({ \sum_{i=1}^k S_i} \right) \\
&& \leq \frac{k^{4n-1}}{4} \left[  \left \| {  \sum_{i=1}^k \left ( \left ( S_i^{\sharp_A} S_i \right) ^{2n}+\left ( S_i S_i^{\sharp_A} \right)^{2n} \right) }   \right \|_A  + 2 \sum_{i=1}^k\omega_A\left({ \left ( S_i^{\sharp_A} S_i \right) ^{n}\left ( S_i S_i^{\sharp_A} \right)^{n}} \right) \right],
\end{eqnarray*}
for all $n=1,2,3,\ldots.$

\end{theorem}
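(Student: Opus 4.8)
The plan is to transport the whole inequality to the Hilbert space $\mathbf{R}(A^{1/2})$ via the homomorphism $T\mapsto\widetilde{T}$ and then to prove the resulting classical numerical radius estimate by a chain of scalar inequalities. Writing $R_i:=\widetilde{S_i}\in\mathcal{B}(\mathbf{R}(A^{1/2}))$ and using that $T\mapsto\widetilde{T}$ is multiplicative with $\widetilde{T^{\sharp_A}}=(\widetilde{T})^{*}$, together with the identities $\omega_A(T)=\omega(\widetilde{T})$ and $\|T\|_A=\|\widetilde{T}\|$ from \eqref{tilde}, I would check that every quantity in the statement becomes its classical analogue: $\omega_A\big(\sum_i S_i\big)=\omega\big(\sum_i R_i\big)$, $(S_i^{\sharp_A}S_i)^m\mapsto (R_i^{*}R_i)^m$, $(S_iS_i^{\sharp_A})^m\mapsto (R_iR_i^{*})^m$, and so on. Hence it is enough to prove, on a genuine Hilbert space, that $\omega^{4n}\big(\sum_i R_i\big)\le \tfrac{k^{4n-1}}{4}\big[\,\big\|\sum_i\big((R_i^{*}R_i)^{2n}+(R_iR_i^{*})^{2n}\big)\big\|+2\sum_i\omega\big((R_i^{*}R_i)^{n}(R_iR_i^{*})^{n}\big)\big]$.

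Fixing a unit vector $x$, I would first split off the combinatorial factor: by the triangle inequality and convexity of $t\mapsto t^{4n}$ (Jensen's inequality applied to the $k$ numbers $|\langle R_ix,x\rangle|$), one gets $\big|\langle\sum_i R_ix,x\rangle\big|^{4n}\le k^{4n-1}\sum_i|\langle R_ix,x\rangle|^{4n}$. The heart of the argument is then the estimate of a single term $|\langle R_ix,x\rangle|^{4n}$. Here I would invoke the mixed Schwarz inequality $|\langle R_ix,x\rangle|^{2}\le\langle|R_i|x,x\rangle\langle|R_i^{*}|x,x\rangle$, raise it to the power $2n$, and apply the operator power inequality $\langle Px,x\rangle^{2n}\le\langle P^{2n}x,x\rangle$ (valid for positive $P$ and $\|x\|=1$) to $P=|R_i|$ and $P=|R_i^{*}|$, which yields $|\langle R_ix,x\rangle|^{4n}\le\langle(R_i^{*}R_i)^{n}x,x\rangle\,\langle(R_iR_i^{*})^{n}x,x\rangle$.

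Abbreviating $P_i=(R_i^{*}R_i)^{n}$ and $Q_i=(R_iR_i^{*})^{n}$, I would combine the two factors through Lemma \ref{Lemma:3} with $e=x$: $\langle P_ix,x\rangle\langle Q_ix,x\rangle\le\tfrac12\big(|\langle P_ix,Q_ix\rangle|+\|P_ix\|\,\|Q_ix\|\big)$. For the first term one has $|\langle P_ix,Q_ix\rangle|=|\langle Q_iP_ix,x\rangle|\le\omega(Q_iP_i)=\omega(P_iQ_i)$, the last equality because $(P_iQ_i)^{*}=Q_iP_i$ and $\omega(X^{*})=\omega(X)$; for the second term, $\|P_ix\|\,\|Q_ix\|\le\tfrac12\big(\langle(R_i^{*}R_i)^{2n}x,x\rangle+\langle(R_iR_i^{*})^{2n}x,x\rangle\big)$ by the arithmetic-geometric mean inequality, since $\|P_ix\|^{2}=\langle(R_i^{*}R_i)^{2n}x,x\rangle$. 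Summing over $i$, bounding $\langle\sum_i\big((R_i^{*}R_i)^{2n}+(R_iR_i^{*})^{2n}\big)x,x\rangle$ by the operator norm of the positive operator $\sum_i\big((R_i^{*}R_i)^{2n}+(R_iR_i^{*})^{2n}\big)$, and finally taking the supremum over all unit $x$, gives the classical inequality; translating back through the tilde map recovers the theorem.

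I expect the third step to be the main obstacle: the bookkeeping of the exponents $n$ versus $2n$ through the mixed Schwarz and power inequalities must be done carefully, and the identification of the off-diagonal term with $\omega\big((R_i^{*}R_i)^{n}(R_iR_i^{*})^{n}\big)$ hinges on the symmetry $\omega(P_iQ_i)=\omega(Q_iP_i)$, which holds because these operators are mutual adjoints and not by any commutativity. One should also confirm that the multiplicativity of the tilde map and the adjoint compatibility $\widetilde{T^{\sharp_A}}=(\widetilde{T})^{*}$ are legitimately available, since the entire reduction rests on them.
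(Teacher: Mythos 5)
Your proposal is correct and follows essentially the same route as the paper's own proof: the same chain of estimates on a unit vector (triangle inequality with the power-convexity/Bohr bound, mixed Schwarz, the McCarthy power inequality $\langle Px,x\rangle^{r}\le\langle P^{r}x,x\rangle$, Buzano's inequality, then AM--GM), followed by the same transfer between $\mathcal{H}$ and $\mathbf{R}(A^{1/2})$ via the tilde map using $\widetilde{ST}=\widetilde{S}\widetilde{T}$, $(\widetilde{T})^{*}=\widetilde{T^{\sharp_A}}$, and \eqref{tilde}. The only cosmetic differences are that the paper proves the classical ($A=I$) inequality in $\mathcal{H}$ first and then applies it to the operators $\widetilde{S_i}$, whereas you reduce to the Hilbert-space statement at the outset, and that you justify the off-diagonal term via $\omega(Q_iP_i)=\omega(P_iQ_i)$ (mutual adjoints), which the paper uses implicitly through $|\langle Xx,x\rangle|=|\langle X^{*}x,x\rangle|$.
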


\begin{proof}
Let $x\in \mathcal{H}$ be such that $\|x\|=1$. Since $S_i\in \mathcal{B}_A(\mathcal{H})$, so $S_i\in \mathcal{B}(\mathcal{H})$. Then we have,
\begin{eqnarray*}
&& \left|  \left \langle \left(\sum_{i=1}^k S_i \right)x,x  \right \rangle    \right|^{4n}\\
&\leq&  \left(  \sum_{i=1}^k |\left \langle  S_i x,x  \right \rangle |   \right)^{4n}\\
&\leq&  k^{4n-1}  \sum_{i=1}^k |\left \langle  S_i x,x  \right \rangle |^{4n}\\
&\leq&  k^{4n-1}  \sum_{i=1}^k  \langle  |S_i| x,x \rangle^{2n} \langle  |S_i^*| x,x  \rangle^{2n},\,\,~~\Big(|\left \langle  S_i x,x  \right \rangle |^{2}\leq   \langle  |S_i| x,x \rangle \langle  |S_i^*| x,x  \rangle\Big)\\
&\leq&  k^{4n-1}  \sum_{i=1}^k  \langle  |S_i|^{2n} x,x  \rangle \langle  |S_i^*|^{2n} x,x  \rangle, \,\,~~\Big(\langle Sx,x \rangle^r\leq \langle S^r x,x \rangle, S\geq 0, r\geq 1 \Big) \\
&=&  k^{4n-1}  \sum_{i=1}^k  \langle  |S_i|^{2n} x,x  \rangle \langle   x, |S_i^*|^{2n}x  \rangle \\
&\leq&  \frac{k^{4n-1}}{2}  \sum_{i=1}^k  \left( \left \||S_i|^{2n} x \right \| \left \||S_i^*|^{2n}x \right\|+   |\langle  |S_i|^{2n} x,  |S_i^*|^{2n}x  \rangle| \right)\\
&\leq&  \frac{k^{4n-1}}{2}  \sum_{i=1}^k  \left( \frac{1}{2} (\left \||S_i|^{2n} x \right \|^2+ \left \||S_i^*|^{2n}x \right\|^2)+   |\langle  |S_i|^{2n}|S_i^*|^{2n} x,  x  \rangle| \right)\\
&=&  \frac{k^{4n-1}}{4}  \sum_{i=1}^k \left \langle (|S_i|^{4n}+|S_i^*|^{4n}) x,x \right \rangle +\frac{k^{4n-1}}{2} \sum_{i=1}^k |\langle  |S_i|^{2n}|S_i^*|^{2n} x,  x  \rangle| \\
&=&  \frac{k^{4n-1}}{4}   \left \langle \left(\sum_{i=1}^k(|S_i|^{4n}+|S_i^*|^{4n})\right) x,x \right \rangle +\frac{k^{4n-1}}{2} \sum_{i=1}^k |\langle  |S_i|^{2n}|S_i^*|^{2n} x,  x  \rangle| \\
&\leq &  \frac{k^{4n-1}}{4}     \left \| \sum_{i=1}^k(|S_i|^{4n}+|S_i^*|^{4n})  \right \| + \frac{k^{4n-1}}{2}\sum_{i=1}^k\omega\left(  |S_i|^{2n}|S_i^*|^{2n} \right),\\
\end{eqnarray*}
where the second inequality follows from Bohr's inequality $($\cite{v}$)$, i.e.,
if for $i=1,2,\ldots,n$, $a_i$ be a positive real number then
\[\left( \sum_{i=1}^ka_i\right)^r \leq k^{r-1}\sum_{i=1}^ka_i^r, r\geq 1\]
and the fifth inequality follows from  Buzano's inequality (\cite{buzano}), i.e.,
if $x,y,e\in \mathcal{H}$ with $\|e\|=1$ then
\[|\langle x,e\rangle \langle e,y\rangle|\leq \frac{1}{2}\left(\|x\| \|y\|+|\langle x,y\rangle|\right).\]
Taking supremum over all $x\in \mathcal{H}$ with $\|x\|=1$ we get,
\begin{eqnarray}\label{H1}
 \omega^{4n}\left(\sum_{i=1}^kS_i \right) \leq   \frac{k^{4n-1}}{4}     \left \| \sum_{i=1}^k(|S_i|^{4n}+|S_i^*|^{4n})  \right \| + \frac{k^{4n-1}}{2}\sum_{i=1}^k\omega\left(  |S_i|^{2n}|S_i^*|^{2n} \right).
\end{eqnarray}
Now since $\mathcal{B}_A(\mathcal{H}) \subseteq \mathcal{B}_{A{^{1/2}}}(\mathcal{H})$, so for each $i=1,2,\ldots,k$, $S_i\in \mathcal{B}_{A{^{1/2}}}(\mathcal{H}).$Therefore, there exists unique $\widetilde{S_i}$ in $\mathcal{B}(\mathbf{R}(A^{1/2}))$ such that $Z_AS_i =\widetilde{S_i}Z_A$. Now, $\mathbf{R}(A^{1/2})$ being a complex Hilbert space, so (\ref{H1}) implies that
\begin{eqnarray*}
 && \omega^{4n}\left( \sum_{i=1}^k\widetilde{S_i} \right) \\
&& \leq   \frac{k^{4n-1}}{4}  \left \| \sum_{i=1}^k(|\widetilde{S_i}|^{4n}+|\widetilde{S_i}^*|^{4n})  \right \|_{\mathcal{B}(\mathbf{R}(A^{1/2}))} +\frac{k^{4n-1}}{2}  \sum_{i=1}^k \omega\left(  |\widetilde{S_i}|^{2n}|\widetilde{S_i}^*|^{2n} \right).
\end{eqnarray*}
It is well-known that for $S,T \in \mathcal{B}_{A{^{1/2}}}(\mathcal{H})$, we have $\widetilde{S+\lambda T}=\widetilde{S}+\lambda \widetilde{T}$ and $\widetilde{ST}=\widetilde{S}\widetilde{T}$ for all $\lambda \in \mathbb{C}$ (see \cite{fekilaa}). So, from the above inequality we have,
\begin{eqnarray*}
 \omega^{4n}\left(   \widetilde{ \sum_{i=1}^k S_i} \right)
&\leq&    \frac{k^{4n-1}}{4}   \left \| \sum_{i=1}^k \left( \left ((\widetilde{S_i})^*\widetilde{S_i} \right) ^{2n}+\left ( \widetilde{S_i} (\widetilde{S_i})^*\right)^{2n} \right) \right \|_{\mathcal{B}(\mathbf{R}(A^{1/2}))}\\
&& + \frac{k^{4n-1}}{2} \sum_{i=1}^k \omega\left(  \left ((\widetilde{S_i})^*\widetilde{S_i} \right)^{n}\left ( \widetilde{S_i} (\widetilde{S_i})^*\right)^{n} \right).
\end{eqnarray*}
Also since $(\widetilde{S_i})^*=\widetilde{S_i^{\sharp_A}}$, so
\begin{eqnarray*}
 \omega^{4n}\left(   \widetilde{ \sum_{i=1}^k S_i} \right)
&\leq&    \frac{k^{4n-1}}{4}  \left \|\sum_{i=1}^k \left( \left (\widetilde{S_i^{\sharp_A}}\widetilde{S_i} \right) ^{2n}+\left ( \widetilde{S_i} \widetilde{S_i^{\sharp_A}}\right)^{2n} \right) \right \|_{\mathcal{B}(\mathbf{R}(A^{1/2}))} \\
&& + \frac{k^{4n-1}}{2}\omega\left(  \left (\widetilde{S_i^{\sharp_A}}\widetilde{S_i} \right)^{n}\left ( \widetilde{S_i} \widetilde{S_i^{\sharp_A}}\right)^{n} \right) \\
&=& \frac{k^{4n-1}}{4}  \left \| \widetilde{  \sum_{i=1}^k \left ( \left ( S_i^{\sharp_A} S_i \right) ^{2n}+\left ( S_i S_i^{\sharp_A} \right)^{2n} \right) }   \right \|_{\mathcal{B}(\mathbf{R}(A^{1/2}))} \\
&& + \frac{k^{4n-1}}{2} \sum_{i=1}^k\omega\left(  \widetilde{ \left ( S_i^{\sharp_A} S_i \right) ^{n}\left ( S_i S_i^{\sharp_A} \right)^{n}} \right).
\end{eqnarray*}
Hence,
\begin{eqnarray*}
\omega^{4n}_A\left({ \sum_{i=1}^k S_i} \right) &\leq& \frac{k^{4n-1}}{4}  \left \| {  \sum_{i=1}^k \left ( \left ( S_i^{\sharp_A} S_i \right) ^{2n}+\left ( S_i S_i^{\sharp_A} \right)^{2n} \right) }   \right \|_A \\
&& + \frac{k^{4n-1}}{2} \sum_{i=1}^k\omega_A\left({ \left ( S_i^{\sharp_A} S_i \right) ^{n}\left ( S_i S_i^{\sharp_A} \right)^{n}} \right).
\end{eqnarray*}
Thus, we complete the proof.
\end{proof}

In particular considering $k=1$ and $n=1$ in Theorem \ref{theo1} we get the following result.
\begin{corollary}
Let $S\in \mathcal{B}_A(\mathbb{H}).$ Then
\[\omega_A^{4}(S) \leq \frac{1}{4}\left \|  \left ( S^{\sharp_A} S \right) ^{2}+\left ( S S^{\sharp_A} \right)^{2}     \right \|_A  + \frac{1}{2} \omega_A\left( S^{\sharp_A} S^2 S^{\sharp_A} \right). \]

\end{corollary}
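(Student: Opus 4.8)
The plan is to specialize Theorem \ref{theo1} to the simplest case $k=1$ and $n=1$, since the corollary is stated precisely as this instance. First I would set $k=1$, so that the single summation index collapses to one term with $S_1=S$, and I would record that the combinatorial prefactor becomes $k^{4n-1}=1^{3}=1$. With $n=1$ the exponents $2n$ and $n$ occurring in Theorem \ref{theo1} reduce to $2$ and $1$, respectively, so no power larger than the second ever appears.

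Next I would rewrite the two pieces on the right-hand side. The norm term becomes $\frac{1}{4}\|(S^{\sharp_A}S)^{2}+(SS^{\sharp_A})^{2}\|_A$ directly. For the numerical-radius term, I would simplify the product $(S^{\sharp_A}S)^{n}(SS^{\sharp_A})^{n}$ at $n=1$ by associativity in the algebra $\mathcal{B}_A(\mathcal{H})$: we have $(S^{\sharp_A}S)(SS^{\sharp_A})=S^{\sharp_A}(S\,S)S^{\sharp_A}=S^{\sharp_A}S^{2}S^{\sharp_A}$, and therefore $\omega_A\big((S^{\sharp_A}S)^{1}(SS^{\sharp_A})^{1}\big)=\omega_A(S^{\sharp_A}S^{2}S^{\sharp_A})$.

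Finally I would distribute the factor $\frac{1}{4}$ across the bracket furnished by Theorem \ref{theo1}: the summand $2\sum_{i=1}^{1}\omega_A(\cdots)$, once multiplied by $\frac14$, contributes exactly $\frac{1}{2}\omega_A(S^{\sharp_A}S^{2}S^{\sharp_A})$, which together with the norm term yields the claimed inequality. Because the whole argument is a verbatim substitution into an inequality that has already been established, there is no genuine obstacle here; the only point requiring even minimal attention is the elementary rewriting of the operator product $(S^{\sharp_A}S)(SS^{\sharp_A})$ as $S^{\sharp_A}S^{2}S^{\sharp_A}$, which is immediate from associativity.
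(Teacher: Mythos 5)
Your proposal is correct and is exactly the paper's argument: the paper obtains this corollary by setting $k=1$ and $n=1$ in Theorem \ref{theo1}, and your substitution, including the rewriting $(S^{\sharp_A}S)(SS^{\sharp_A})=S^{\sharp_A}S^{2}S^{\sharp_A}$ and the distribution of the factor $\tfrac{1}{4}$, carries this out faithfully.
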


Next result reads as:

\begin{theorem}\label{theo2}
For $i=1,2,\ldots,n,$ let $S_i\in \mathcal{B}_A(\mathcal{H})$. Then
\[\omega_A^{2n} \left(\sum_{i=1}^kS_i  \right)\leq \frac{k^{2n-1}}{2}\left\|  \sum_{i=1}^k \left( \left (S_i^{\sharp_A}S_i\right)^n+  \left( S_iS_i^{\sharp_A}\right)^n\right)  \right\|_A.\]
\end{theorem}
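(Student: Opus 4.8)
The plan is to mirror the structure of the proof of Theorem~\ref{theo1}, but working with the simpler scalar bound coming from the Cauchy--Schwarz inequality rather than from Buzano's inequality. First I would fix $x\in\mathcal{H}$ with $\|x\|=1$ and estimate $\big|\langle(\sum_{i=1}^k S_i)x,x\rangle\big|^{2n}$ entirely in the ambient Hilbert space $\mathcal{H}$, exploiting that $\mathcal{B}_A(\mathcal{H})\subseteq\mathcal{B}(\mathcal{H})$. Applying Bohr's inequality with exponent $r=2n\ge 1$ gives
\begin{align*}
\Big|\langle\big(\textstyle\sum_{i=1}^k S_i\big)x,x\rangle\Big|^{2n}
\le k^{2n-1}\sum_{i=1}^k|\langle S_ix,x\rangle|^{2n}.
\end{align*}
Then for each $i$ I would use the mixed Cauchy--Schwarz estimate $|\langle S_ix,x\rangle|^2\le\langle|S_i|x,x\rangle\langle|S_i^*|x,x\rangle$, raise it to the power $n$, and follow it with the operator convexity bound $\langle Sx,x\rangle^r\le\langle S^rx,x\rangle$ for $S\ge0$, $r\ge1$, exactly as in Theorem~\ref{theo1}. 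This yields $|\langle S_ix,x\rangle|^{2n}\le\langle|S_i|^{2n}x,x\rangle\,\langle|S_i^*|^{2n}x,x\rangle$.

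Next I would apply the arithmetic--geometric mean inequality to each product $\langle|S_i|^{2n}x,x\rangle\,\langle|S_i^*|^{2n}x,x\rangle$, replacing it by $\tfrac12\big(\langle|S_i|^{2n}x,x\rangle^2+\langle|S_i^*|^{2n}x,x\rangle^2\big)$ or, more directly, by $\tfrac12\langle(|S_i|^{2n}+|S_i^*|^{2n})x,x\rangle$ after another application of the power inequality; the cleanest route is to bound the geometric mean of the two nonnegative quantities by their arithmetic mean and then use $\langle|S_i|^{2n}x,x\rangle\le\langle|S_i|^{2n}x,x\rangle$ together with the self-improving $\langle Sx,x\rangle^1$ comparison to collapse the squares. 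Summing over $i$ and pulling the sum inside the inner product produces
\begin{align*}
\Big|\langle\big(\textstyle\sum_{i=1}^k S_i\big)x,x\rangle\Big|^{2n}
\le \frac{k^{2n-1}}{2}\Big\langle\Big(\textstyle\sum_{i=1}^k(|S_i|^{2n}+|S_i^*|^{2n})\Big)x,x\Big\rangle
\le \frac{k^{2n-1}}{2}\Big\|\textstyle\sum_{i=1}^k(|S_i|^{2n}+|S_i^*|^{2n})\Big\|.
\end{align*}
Taking the supremum over unit vectors $x$ gives the Hilbert-space version of the claimed bound, namely $\omega^{2n}(\sum_i S_i)\le\frac{k^{2n-1}}{2}\|\sum_i(|S_i|^{2n}+|S_i^*|^{2n})\|$.

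Finally I would transfer this scalar inequality from $\mathcal{H}$ to the semi-Hilbertian setting through the isometric representation $Z_A$. Since each $S_i\in\mathcal{B}_A(\mathcal{H})\subseteq\mathcal{B}_{A^{1/2}}(\mathcal{H})$, there is a unique $\widetilde{S_i}\in\mathcal{B}(\mathbf{R}(A^{1/2}))$ with $Z_AS_i=\widetilde{S_i}Z_A$, and by \eqref{tilde} we have $\|T\|_A=\|\widetilde{T}\|$ and $\omega_A(T)=\omega(\widetilde{T})$, while $\widetilde{S+\lambda T}=\widetilde{S}+\lambda\widetilde{T}$, $\widetilde{ST}=\widetilde{S}\widetilde{T}$, and $(\widetilde{S_i})^*=\widetilde{S_i^{\sharp_A}}$. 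Applying the Hilbert-space inequality to the operators $\widetilde{S_i}$ on $\mathbf{R}(A^{1/2})$ and rewriting $|\widetilde{S_i}|^{2n}=((\widetilde{S_i})^*\widetilde{S_i})^n=\widetilde{(S_i^{\sharp_A}S_i)^n}$ and $|\widetilde{S_i}^*|^{2n}=\widetilde{(S_iS_i^{\sharp_A})^n}$ recovers precisely the stated bound after translating the norm and numerical radius back via \eqref{tilde}. The main obstacle is the middle step: I expect the delicate point to be managing the power inequality $\langle Sx,x\rangle^r\le\langle S^rx,x\rangle$ together with the AM--GM step so that the exponents collapse exactly to $|S_i|^{2n}+|S_i^*|^{2n}$ without an extra factor, since here, unlike in Theorem~\ref{theo1}, there is no Buzano term to absorb the cross-product $\langle|S_i|^{n}|S_i^*|^{n}x,x\rangle$, and one must instead discard it cleanly via AM--GM.
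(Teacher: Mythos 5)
Your overall architecture is exactly the paper's: bound $\big|\langle(\sum_{i=1}^k S_i)x,x\rangle\big|^{2n}$ in the ambient Hilbert space via Bohr's inequality, the mixed Cauchy--Schwarz inequality and the power inequality $\langle Sx,x\rangle^r\le\langle S^rx,x\rangle$, then transfer the resulting norm inequality to the semi-Hilbertian setting through $Z_A$, using $(\widetilde{S_i})^*=\widetilde{S_i^{\sharp_A}}$ and \eqref{tilde}; your transfer step is correct and coincides with the paper's. But the scalar estimate in the middle --- precisely the step you flag as delicate --- is wrong as written, in two places. First, the claimed inequality $|\langle S_ix,x\rangle|^{2n}\le\langle|S_i|^{2n}x,x\rangle\,\langle|S_i^*|^{2n}x,x\rangle$ is false in general: raising $|\langle S_ix,x\rangle|^{2}\le\langle|S_i|x,x\rangle\langle|S_i^*|x,x\rangle$ to the $n$-th power and applying the power inequality with $r=n$ gives only $|\langle S_ix,x\rangle|^{2n}\le\langle|S_i|^{n}x,x\rangle\,\langle|S_i^*|^{n}x,x\rangle$, with exponent $n$, not $2n$, inside the inner products; for $S_i=\varepsilon I$ with $0<\varepsilon<1$ your version reads $\varepsilon^{2n}\le\varepsilon^{4n}$, which fails. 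Second, the proposed collapse $\langle|S_i|^{2n}x,x\rangle\,\langle|S_i^*|^{2n}x,x\rangle\le\tfrac12\langle(|S_i|^{2n}+|S_i^*|^{2n})x,x\rangle$ is also false in general (take $S_i=2I$: it reads $2^{4n}\le 2^{2n}$): the inequality $ab\le\tfrac12(a+b)$ is not a valid form of AM--GM, and the ``self-improving $\langle Sx,x\rangle^{1}$ comparison'' you invoke to kill the squares would require $\langle Tx,x\rangle^{2}\le\langle Tx,x\rangle$, which holds only when $\langle Tx,x\rangle\le1$.

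The repair is to perform the estimates in the paper's order, keeping the exponent at $n$ until the last moment: from $|\langle S_ix,x\rangle|^{2n}\le\langle|S_i|^{n}x,x\rangle\,\langle|S_i^*|^{n}x,x\rangle$, apply AM--GM in the form $ab\le\tfrac12(a^{2}+b^{2})$, and only then apply the power inequality with $r=2$ to each square, namely $\langle|S_i|^{n}x,x\rangle^{2}\le\langle|S_i|^{2n}x,x\rangle$ and $\langle|S_i^*|^{n}x,x\rangle^{2}\le\langle|S_i^*|^{2n}x,x\rangle$. This yields $|\langle S_ix,x\rangle|^{2n}\le\tfrac12\langle(|S_i|^{2n}+|S_i^*|^{2n})x,x\rangle$ with the exponents matching exactly and no spurious factor; summing over $i$, bounding by $\big\|\sum_{i=1}^k(|S_i|^{2n}+|S_i^*|^{2n})\big\|$, taking the supremum over unit vectors, and then running your $Z_A$-transfer argument verbatim completes the proof. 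In short: right strategy (it is the paper's own), correct final displayed inequality and correct transfer, but the two intermediate inequalities you offer to justify the key display are false as stated and must be replaced by this reordered chain.
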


\begin{proof}
Let $x\in \mathcal{H}$ with $\|x\|=1$. $S_i\in \mathcal{B}_A(\mathcal{H})$ implies $S_i\in \mathcal{B}(\mathcal{H})$. So we have,
\begin{eqnarray*}
&& \left|  \left \langle \left(\sum_{i=1}^k S_i \right)x,x  \right \rangle    \right|^{2n}\\
&\leq&  \left(  \sum_{i=1}^k |\left \langle  S_i x,x  \right \rangle |   \right)^{2n}\\
&\leq&  k^{2n-1}  \sum_{i=1}^k |\left \langle  S_i x,x  \right \rangle |^{2n}\\
&\leq&  k^{2n-1}  \sum_{i=1}^k  \langle  |S_i| x,x \rangle^{n} \langle  |S_i^*| x,x  \rangle^{n},\,\,~~\Big(|\left \langle  S_i x,x  \right \rangle |^{2}\leq   \langle  |S_i| x,x \rangle \langle  |S_i^*| x,x  \rangle\Big)\\
&\leq&  k^{2n-1}  \sum_{i=1}^k  \langle  |S_i|^{n} x,x  \rangle \langle  |S_i^*|^{n} x,x  \rangle, \,\,~~\Big(\langle Sx,x \rangle^r\leq \langle S^r x,x \rangle, S\geq 0, r\geq 1 \Big) \\
&\leq& \frac{k^{2n-1}}{2} \sum_{i=1}^k  \left(\langle  |S_i|^{n} x,x  \rangle^2+ \langle  |S_i^*|^{n} x,x  \rangle^2\right)\\
&\leq& \frac{k^{2n-1}}{2} \sum_{i=1}^k  \left(\langle  |S_i|^{2n} x,x  \rangle+ \langle  |S_i^*|^{2n} x,x  \rangle\right)\\
&=& \frac{k^{2n-1}}{2} \sum_{i=1}^k  \langle  \left(|S_i|^{2n}+ |S_i^*|^{2n} \right) x,x  \rangle\\
&=& \frac{k^{2n-1}}{2}   \left \langle \sum_{i=1}^k\left( |S_i|^{2n}+ |S_i^*|^{2n} \right) x,x  \right \rangle\\
&\leq& \frac{k^{2n-1}}{2}   \left \| \sum_{i=1}^k\left( |S_i|^{2n}+ |S_i^*|^{2n} \right) \right \|.
\end{eqnarray*}
Taking supremum over all $x\in \mathcal{H}$ with $\|x\|=1$ we get,
\begin{eqnarray}\label{H2}
\omega^{2n} \left(\sum_{i=1}^kS_i  \right) &\leq& \frac{k^{2n-1}}{2}   \left \| \sum_{i=1}^k\left( |S_i|^{2n}+ |S_i^*|^{2n} \right)\right \|.
\end{eqnarray}
Now for each $i=1,2,\ldots,k$, $S_i\in \mathcal{B}_{A{^{1/2}}}(\mathcal{H}).$ So, there exists unique $\widetilde{S_i}$ in $\mathcal{B}(\mathbf{R}(A^{1/2}))$ such that $Z_AS_i =\widetilde{S_i}Z_A$. Now $\mathbf{R}(A^{1/2})$ being a complex Hilbert space, we have from (\ref{H2}) that
\begin{eqnarray*}
 \omega^{2n}\left( \sum_{i=1}^k\widetilde{S_i} \right) &\leq&   \frac{k^{2n-1}}{2}  \left \| \sum_{i=1}^k(|\widetilde{S_i}|^{2n}+|\widetilde{S_i}^*|^{2n})  \right \|_{\mathcal{B}(\mathbf{R}(A^{1/2}))}.
\end{eqnarray*}
Using the property that if $S,T \in \mathcal{B}_{A{^{1/2}}}(\mathcal{H})$ then $\widetilde{S+\lambda T}=\widetilde{S}+\lambda \widetilde{T}$ and $\widetilde{ST}=\widetilde{S}\widetilde{T}$ for all $\lambda \in \mathbb{C}$, we have
\begin{eqnarray*}
 \omega^{2n}\left(   \widetilde{ \sum_{i=1}^k S_i} \right)
&\leq&    \frac{k^{2n-1}}{2}   \left \| \sum_{i=1}^k \left( \left ((\widetilde{S_i})^*\widetilde{S_i} \right) ^{n}+\left ( \widetilde{S_i} (\widetilde{S_i})^*\right)^{n} \right) \right \|_{\mathcal{B}(\mathbf{R}(A^{1/2}))}.
\end{eqnarray*}
Also $(\widetilde{S_i})^*=\widetilde{S_i^{\sharp_A}}$, so
\begin{eqnarray*}
 \omega^{2n}\left(   \widetilde{ \sum_{i=1}^k S_i} \right)
&\leq&    \frac{k^{2n-1}}{2}  \left \|\sum_{i=1}^k \left( \left (\widetilde{S_i^{\sharp_A}}\widetilde{S_i} \right) ^{n}+\left ( \widetilde{S_i} \widetilde{S_i^{\sharp_A}}\right)^{n} \right) \right \|_{\mathcal{B}(\mathbf{R}(A^{1/2}))}\\
&=& \frac{k^{2n-1}}{2}  \left \| \widetilde{  \sum_{i=1}^k \left ( \left ( S_i^{\sharp_A} S_i \right) ^{n}+\left ( S_i S_i^{\sharp_A} \right)^{n} \right) }   \right \|_{\mathcal{B}(\mathbf{R}(A^{1/2}))}.
\end{eqnarray*}
Hence,
\begin{eqnarray*}
\omega^{2n}_A\left({ \sum_{i=1}^k S_i} \right) &\leq& \frac{k^{2n-1}}{2}  \left \| {  \sum_{i=1}^k \left ( \left ( S_i^{\sharp_A} S_i \right) ^{n}+\left ( S_i S_i^{\sharp_A} \right)^{n} \right) }   \right \|_A.
\end{eqnarray*}
Hence we complete the proof.
\end{proof}

Finally we obtain the following result.

\begin{theorem}\label{theo3}
For $i=1,2,\ldots,n,$ let $S_i\in \mathcal{B}_A(\mathcal{H})$. Then,
\[\omega_A^{2n} \left(\sum_{i=1}^kS_i  \right)\leq \frac{k^{2n-1}}{\sqrt{2}}  \sum_{i=1}^k \omega_A\left( \left (S_i^{\sharp_A}S_i\right)^n+  {\rm i}\left( S_iS_i^{\sharp_A}\right)^n\right).\]
\end{theorem}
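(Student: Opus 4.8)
The plan is to follow the template established in the proofs of Theorems \ref{theo1} and \ref{theo2}: first prove the corresponding scalar-operator inequality in the underlying Hilbert space $\mathcal{H}$, and then transport it to $\mathbf{R}(A^{1/2})$ by means of the map $Z_A$. Since $\mathcal{B}_A(\mathcal{H})\subseteq\mathcal{B}_{A^{1/2}}(\mathcal{H})\subseteq\mathcal{B}(\mathcal{H})$, each $S_i$ is an ordinary bounded operator, so I would fix $x\in\mathcal{H}$ with $\|x\|=1$ and reproduce the opening chain of Theorem \ref{theo2}: the triangle inequality and Bohr's inequality give $\big|\big\langle(\sum_{i=1}^k S_i)x,x\big\rangle\big|^{2n}\leq k^{2n-1}\sum_{i=1}^k|\langle S_i x,x\rangle|^{2n}$, and then the mixed Schwarz inequality $|\langle S_i x,x\rangle|^2\leq\langle|S_i|x,x\rangle\langle|S_i^*|x,x\rangle$ together with two applications of the power inequality $\langle Sx,x\rangle^r\leq\langle S^r x,x\rangle$ ($S\geq 0$, $r\geq 1$) lead to $|\langle S_i x,x\rangle|^{2n}\leq\frac{1}{2}\big(\langle|S_i|^{2n}x,x\rangle+\langle|S_i^*|^{2n}x,x\rangle\big)$.

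The single new ingredient, which distinguishes this from Theorem \ref{theo2}, is the way the right-hand side is packaged. Set $a=\langle|S_i|^{2n}x,x\rangle$ and $b=\langle|S_i^*|^{2n}x,x\rangle$; both are nonnegative reals because $|S_i|^{2n}$ and $|S_i^*|^{2n}$ are positive. I would then invoke the elementary bound $a+b\leq\sqrt{2}\,\sqrt{a^2+b^2}$ (i.e. $(a+b)^2\leq 2(a^2+b^2)$) and the identity $\sqrt{a^2+b^2}=|a+{\rm i}b|=\big|\big\langle(|S_i|^{2n}+{\rm i}|S_i^*|^{2n})x,x\big\rangle\big|$, valid precisely because $a$ and $b$ are real. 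This yields $|\langle S_i x,x\rangle|^{2n}\leq\frac{1}{\sqrt{2}}\big|\big\langle(|S_i|^{2n}+{\rm i}|S_i^*|^{2n})x,x\big\rangle\big|\leq\frac{1}{\sqrt{2}}\,\omega(|S_i|^{2n}+{\rm i}|S_i^*|^{2n})$. Summing over $i$ and taking the supremum over all unit vectors $x$ gives the classical estimate $\omega^{2n}(\sum_{i=1}^k S_i)\leq\frac{k^{2n-1}}{\sqrt{2}}\sum_{i=1}^k\omega(|S_i|^{2n}+{\rm i}|S_i^*|^{2n})$.

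To conclude, I would carry this inequality over to $\mathbf{R}(A^{1/2})$ exactly as in Theorems \ref{theo1} and \ref{theo2}. Each $S_i\in\mathcal{B}_{A^{1/2}}(\mathcal{H})$ admits a unique $\widetilde{S_i}\in\mathcal{B}(\mathbf{R}(A^{1/2}))$ with $Z_A S_i=\widetilde{S_i}Z_A$; applying the classical estimate to $\{\widetilde{S_i}\}$ in the complex Hilbert space $\mathbf{R}(A^{1/2})$ and using $\widetilde{S+\lambda T}=\widetilde S+\lambda\widetilde T$, $\widetilde{ST}=\widetilde S\,\widetilde T$ and $(\widetilde{S_i})^*=\widetilde{S_i^{\sharp_A}}$, one rewrites $|\widetilde{S_i}|^{2n}=\big((\widetilde{S_i})^*\widetilde{S_i}\big)^n=\widetilde{(S_i^{\sharp_A}S_i)^n}$ and $|\widetilde{S_i}^*|^{2n}=\widetilde{(S_iS_i^{\sharp_A})^n}$. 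The identities $\|T\|_A=\|\widetilde T\|$ and $\omega_A(T)=\omega(\widetilde T)$ in \eqref{tilde} then convert each $\omega(|\widetilde{S_i}|^{2n}+{\rm i}|\widetilde{S_i}^*|^{2n})$ into $\omega_A\big((S_i^{\sharp_A}S_i)^n+{\rm i}(S_iS_i^{\sharp_A})^n\big)$ and the left side into $\omega_A^{2n}(\sum_{i=1}^k S_i)$, which is the asserted bound. I anticipate no serious obstacle; the only delicate point is the scalar repackaging step, where the positivity of $|S_i|^{2n}$ and $|S_i^*|^{2n}$ (hence reality of $a,b$) is what allows $\sqrt{a^2+b^2}$ to be read off as the modulus of the quadratic form of $|S_i|^{2n}+{\rm i}|S_i^*|^{2n}$.
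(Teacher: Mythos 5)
Your proposal is correct and follows essentially the same route as the paper: you reproduce the opening chain from Theorem \ref{theo2} to reach $\frac{k^{2n-1}}{2}\sum_{i=1}^k\big(\langle|S_i|^{2n}x,x\rangle+\langle|S_i^*|^{2n}x,x\rangle\big)$, repackage it via the scalar inequality $a+b\leq\sqrt{2}\,|a+{\rm i}b|$ for real $a,b$ (exactly the paper's observation), pass to $\omega(|S_i|^{2n}+{\rm i}|S_i^*|^{2n})$, and then lift to $\mathbf{R}(A^{1/2})$ through $Z_A$ using $(\widetilde{S_i})^*=\widetilde{S_i^{\sharp_A}}$ and \eqref{tilde}. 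No gaps; this is the paper's argument.
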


\begin{proof}
Let $x\in \mathcal{H}$ with $\|x\|=1$. Then we have,
\begin{eqnarray*}
\left|  \left \langle \left(\sum_{i=1}^k S_i \right)x,x  \right \rangle    \right|^{2n}
&\leq& \frac{k^{2n-1}}{2} \sum_{i=1}^k  \left(\langle  |S_i|^{2n} x,x  \rangle+ \langle  |S_i^*|^{2n} x,x  \rangle\right).
\end{eqnarray*}
Now we observe that $|a+b|\leq \sqrt{2}|a+{\rm i}b|$ for all $a,b\in \mathbb{R}.$ Using this inequality we have,
\begin{eqnarray*}
\left|  \left \langle \left(\sum_{i=1}^k S_i \right)x,x  \right \rangle    \right|^{2n}
&\leq& \frac{k^{2n-1}}{\sqrt{2}} \sum_{i=1}^k  \left|\langle  |S_i|^{2n} x,x  \rangle+ {\rm i}\langle  |S_i^*|^{2n} x,x  \rangle\right|\\
&\leq& \frac{k^{2n-1}}{\sqrt{2}} \sum_{i=1}^k  \omega\left (  |S_i|^{2n} + {\rm i}  |S_i^*|^{2n} \right).
\end{eqnarray*}
Taking supremum over all $x\in \mathcal{H}$ with $\|x\|=1$ we get,
\begin{eqnarray}\label{H3}
\omega^{2n} \left(\sum_{i=1}^kS_i  \right) &\leq& \frac{k^{2n-1}}{\sqrt{2}} \sum_{i=1}^k  \omega\left (  |S_i|^{2n} + {\rm i}  |S_i^*|^{2n} \right).
\end{eqnarray}
Now for each $i=1,2,\ldots,k$, $S_i\in \mathcal{B}_{A{^{1/2}}}(\mathcal{H}).$ So, there exists unique $\widetilde{S_i}$ in $\mathcal{B}(\mathbf{R}(A^{1/2}))$ such that $Z_AS_i =\widetilde{S_i}Z_A$. Now $\mathbf{R}(A^{1/2})$ being a complex Hilbert space, from (\ref{H3}) we get,
\begin{eqnarray}\label{H4}
\omega^{2n} \left(\sum_{i=1}^k\widetilde{S_i}  \right) &\leq& \frac{k^{2n-1}}{\sqrt{2}} \sum_{i=1}^k  \omega\left (  |\widetilde{S_i}|^{2n} + {\rm i} |\widetilde{S_i}^*|^{2n} \right).
\end{eqnarray}
Using the property that if $S,T \in \mathcal{B}_{A{^{1/2}}}(\mathcal{H})$ then $\widetilde{S+\lambda T}=\widetilde{S}+\lambda \widetilde{T}$ and $\widetilde{ST}=\widetilde{S}\widetilde{T}$ for all $\lambda \in \mathbb{C}$ and  $(\widetilde{S_i})^*=\widetilde{S_i^{\sharp_A}}$, we have form (\ref{H4}) that
\begin{eqnarray*}
\omega^{2n} \left(\widetilde{\sum_{i=1}^k\widetilde{S_i} } \right) &\leq& \frac{k^{2n-1}}{\sqrt{2}} \sum_{i=1}^k  \omega\left (  \widetilde{(S_i^{\sharp_A}S_i)^{n} + {\rm i}  (S_iS_i^{\sharp_A})^{n}} \right).
\end{eqnarray*}
\mbox{Hence,} ~~
 \[\omega_A^{2n} \left(\sum_{i=1}^kS_i  \right)\leq \frac{k^{2n-1}}{\sqrt{2}}  \sum_{i=1}^k \omega_A\left( \left (S_i^{\sharp_A}S_i\right)^n+  {\rm i}\left( S_iS_i^{\sharp_A}\right)^n\right),\] as required.
\end{proof}

The following corollary is an easy consequence of Theorem \ref{theo3}.

\begin{corollary}
Let $S\in \mathcal{B}_A(\mathcal{H})$. Then
\[\omega_A^{2} \left(S  \right)\leq \frac{1}{\sqrt{2}} \omega_A\left(  S^{\sharp_A}S+  {\rm i}~~ SS^{\sharp_A}\right).\]
\end{corollary}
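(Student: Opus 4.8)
The plan is to obtain the corollary as a direct specialization of Theorem~\ref{theo3}, which already carries all of the analytic content; no new estimate is needed. First I would apply Theorem~\ref{theo3} to the single operator $S$, that is, take $k=1$ and $n=1$ and set $S_1=S$. With these choices the left-hand side $\omega_A^{2n}\!\left(\sum_{i=1}^k S_i\right)$ becomes $\omega_A^{2}(S)$, and the constant $\tfrac{k^{2n-1}}{\sqrt 2}$ reduces to $\tfrac{1}{\sqrt 2}$ because $k=1$ forces $k^{2n-1}=1$ irrespective of the exponent.

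Next I would verify that the right-hand sum collapses to a single term of the required shape. Since $k=1$, the sum $\sum_{i=1}^k$ consists of the single index $i=1$; and since $n=1$, the powers $(\,\cdot\,)^n$ appearing in $\left(S_i^{\sharp_A}S_i\right)^n+{\rm i}\left(S_iS_i^{\sharp_A}\right)^n$ are trivial, so the argument of $\omega_A$ becomes exactly $S^{\sharp_A}S+{\rm i}\,SS^{\sharp_A}$. Assembling these observations yields $\omega_A^{2}(S)\le \tfrac{1}{\sqrt 2}\,\omega_A\!\left(S^{\sharp_A}S+{\rm i}\,SS^{\sharp_A}\right)$, which is precisely the assertion. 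The only thing to check is this bookkeeping of the index range and the exponents; there is no genuine obstacle, since the substitution reproduces the statement verbatim.

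Should a self-contained argument be preferred over quoting the general theorem, I would instead reproduce the single-operator case of the proof of Theorem~\ref{theo3} directly. Passing to the Hilbert space $\mathbf{R}(A^{1/2})$ via the transfer identities $\eqref{tilde}$, I would begin from the mixed Schwarz inequality $|\langle \widetilde S x,x\rangle|^2\le\langle|\widetilde S|x,x\rangle\,\langle|\widetilde S^{*}|x,x\rangle$, apply the arithmetic--geometric mean inequality to the product on the right, and then invoke $\langle|\widetilde S|x,x\rangle^2\le\langle|\widetilde S|^{2}x,x\rangle$ (valid since $|\widetilde S|\ge 0$) together with its analogue for $|\widetilde S^{*}|$ to reach the bound $\tfrac12\big(\langle|\widetilde S|^{2}x,x\rangle+\langle|\widetilde S^{*}|^{2}x,x\rangle\big)$. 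Using the elementary inequality $|a+b|\le\sqrt2\,|a+{\rm i}b|$ with $a=\langle|\widetilde S|^{2}x,x\rangle$ and $b=\langle|\widetilde S^{*}|^{2}x,x\rangle$, taking the supremum over $\|x\|=1$, and translating $|\widetilde S|^{2}=\widetilde{S^{\sharp_A}S}$ and $|\widetilde S^{*}|^{2}=\widetilde{SS^{\sharp_A}}$ back through $\eqref{tilde}$ would recover the same inequality. I expect the direct substitution into Theorem~\ref{theo3} to be the cleaner route, so that is the one I would present.
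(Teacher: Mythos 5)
Your proposal is correct and matches the paper exactly: the paper derives this corollary precisely by specializing Theorem~\ref{theo3} to $k=1$, $n=1$, and your bookkeeping of the constant $k^{2n-1}=1$ and the collapse of the sum and powers is all that is required. Your optional self-contained argument also faithfully reproduces the single-operator case of the paper's proof of Theorem~\ref{theo3}, so there is nothing to add.
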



\end{document}